\theoremstyle{plain}
\newtheorem{propn}{Proposition}[section]
\newtheorem{thm}[propn]{Theorem}
\newtheorem{cor}[propn]{Corollary}
\theoremstyle{definition}
\newtheorem{remarks}[propn]{Remarks}
\newtheorem{defn}[propn]{Definition}
\newtheorem*{notn}{Notation}
\newtheorem*{term}{Terminology}
\newtheorem{example}[propn]{Example}
\theoremstyle{remark}
\newcommand{\ve}{\varepsilon}
\newcommand{\Lindblad}{\mathcal{L}}
\newcommand{\LV}{L^V}
\newcommand{\LtildeV}{\Ltilde^V}
\newcommand{\CV}{C^V}
\newcommand{\Vdual}{\wt{V}}
\newcommand{\Ltilde}{\wt{L}}
\newcommand{\calQ}{\mathcal{Q}}
\newcommand{\gV}{\gamma^V}
\newcommand{\Xh}{\mathfrak{X}(\init)}
\newcommand{\Xholh}{\mathfrak{X}_{\mathrm{hol}}(\init)}
\newcommand{\Xholhotkhat}{\mathfrak{X}_{\mathrm{hol}}(\init\ot\khat)}
\newcommand{\Xtwohk}{\mathfrak{X}^{(2)}(\init,\noise)}
\newcommand{\Xtwoholhk}{\mathfrak{X}^{(2)}_{\mathrm{hol}}(\init,\noise)}
\newcommand{\Xfourholhk}{\mathfrak{X}^{(4)}_{\mathrm{hol}}(\init,\noise)}
\newcommand{\Xfourholhki}{\mathfrak{X}^{(4)}_{\mathrm{hol}}(\init,\noise_i)}
\newcommand{\QSCVET}{\mathbb{QSC}(\Vil\!:\Exps_\Til)}
\newcommand{\QSCddaggerVETprimeET}{\mathbb{QSC}^{\ddagger}(\Vil\!:\Exps_{\Til'},\Exps_\Til)}
\newcommand{\QSCMrVET}{\mathbb{QSC}_{\mathrm{Mreg}}(\Vil\!:\Exps_\Til)}
\newcommand{\QSCMrAE}{\mathbb{QSC}_{\mathrm{Mreg}}(\Al\!:\Exps)}
\newcommand{\QSCcbMrVET}{\mathbb{QSC}_{\mathrm{cbMreg}}(\Vil\!:\Exps_\Til)}
\newcommand{\QSCVdaggerETprime}{\mathbb{QSC}(\Vil^\dagger\!:\Exps_{\Til'})}
\newcommand{\QSChbHcVET}{\mathbb{QSC}_{\hil\text{-}\mathrm{bHc}}(\Vil\!:\Exps_\Til)}
\newcommand{\QSCddaggercbHcAEE}{\mathbb{QSC}^\ddagger_{\mathrm{cbHc}}(\Al\!:\Exps,\Exps)}
\newcommand{\QSChbHcVdaggerETprime}{\mathbb{QSC}_{\hil\text{-}\mathrm{bHc}}(\Vil^\dagger\!:\Exps_{\Til'})}
\newcommand{\QSCkhatbHcVET}{\mathbb{QSC}_{\khat\text{-}\mathrm{bHc}}(\Vil\!:\Exps_\Til)}
\newcommand{\QSCcbHcVET}{\mathbb{QSC}_{\mathrm{cbHc}}(\Vil\!:\Exps_\Til)}
\newcommand{\QSCddaggerhbHcVETprimeET}{\mathbb{QSC}^\ddagger_{\hil\text{-}\mathrm{bHc}}(\Vil\!:\Exps_{\Til'},\Exps_\Til)}
\newcommand{\QSCddaggerkhatbHcVETprimeET}{\mathbb{QSC}^\ddagger_{\khat\text{-}\mathrm{bHc}}(\Vil\!:\Exps_{\Til'},\Exps_\Til)}
\newcommand{\QSCddaggercbHcVETprimeET}{\mathbb{QSC}^\ddagger_{\mathrm{cbHc}}(\Vil\!:\Exps_{\Til'},\Exps_\Til)}
\newcommand{\Pcd}{P^{c,d}}
\newcommand{\calPcd}{\mathcal{P}^{c,d}}
\newcommand{\calPcc}{\mathcal{P}^{c,c}}
\newcommand{\QDS}{\mathcal{T}}
\newcommand{\minimalKL}{\mathcal{T}^{(K,L)}}
\newcommand{\QScChk}{\mathbb{QS}_{\mathrm{c}}\mathbb{C}(\init,\noise)}
\newcommand{\QScCMrhk}{\mathbb{QS}_{\mathrm{c}}\mathbb{C}_{\mathrm{Mreg}}(\init,\noise)}
\newcommand{\QSuCMrhk}{\mathbb{QS}_{\mathrm{u}}\mathbb{C}_{\mathrm{Mreg}}(\init,\noise)}
\newcommand{\QScCholhk}{\mathbb{QS}_{\mathrm{c}}\mathbb{C}_{\mathrm{hol}}(\init,\noise)}
\newcommand{\noise}{\mathsf{k}}
\newcommand{\vp}{\varpi}
\newcommand{\Nil}{\mathsf{N}}
\newcommand{\Al}{\mathsf{A}}
\newcommand{\Hil}{\mathsf{H}}
\newcommand{\hil}{\mathsf{h}}
\newcommand{\Kil}{\mathsf{K}}
\newcommand{\kil}{\mathsf{k}}
\newcommand{\Vil}{\mathsf{V}}
\newcommand{\Wil}{\mathsf{W}}
\newcommand{\init}{\mathfrak{h}}
\newcommand{\Til}{\mathsf{T}}
\newcommand{\Dil}{\mathsf{D}}
\newcommand{\Exps}{\mathcal{E}}
\newcommand{\Fock}{\mathcal{F}}
\newcommand{\Cstar}{C^*}
\newcommand{\khat}{\wh{\kil}}
\newcommand{\chat}{\wh{c}}
\newcommand{\dhat}{\wh{d}}
\newcommand{\Step}{\mathbb{S}}
\newcommand{\Real}{\mathbb{R}}
\newcommand{\Rplus}{\Real_+}
\newcommand{\Comp}{\mathbb{C}}
\newcommand{\Nat}{\mathbb{N}}
\newcommand{\cb}{{\text{\tu{cb}}}}
\newcommand{\wh}{\widehat}
\newcommand{\wt}{\widetilde}
\newcommand{\ol}{\overline}
\newcommand{\ul}{\underline}
\newcommand{\ot}{\otimes}
\newcommand{\otM}{\otimes_{\mathrm{M}}}
\newcommand{\otol}{\,\ol{\otimes}\,}
\newcommand{\otul}{\,\ul{\otimes}\,}
\newcommand{\op}{\oplus}
\newcommand{\la}{\langle}
\newcommand{\ra}{\rangle}
\newcommand{\tu}{\textup}
\DeclareMathOperator{\Dom}{Dom}
\DeclareMathOperator{\Ran}{Ran}
\DeclareMathOperator{\Lin}{Lin}
\DeclareMathOperator{\id}{id}
\DeclareMathOperator{\re}{Re}
\DeclareMathOperator{\im}{Im}
\newcommand{\ip}[2]{\langle #1, #2 \rangle}
\newcommand{\bip}[3][\big]{#1\langle #2, #3 #1\rangle}
\newcommand{\norm}[1]{\lVert #1 \rVert}
\newcommand{\cbnorm}[1]{\lVert #1 \rVert_\cb}
\newcommand{\bra}[1]{\langle #1 |}
\newcommand{\ket}[1]{| #1 \rangle}
\newenvironment{alist}
{

\begin{enumerate}}
{\end{enumerate}}
\newenvironment{rlist}
{

\begin{enumerate}}
{\end{enumerate}}
\numberwithin{equation}{section}
\begin{document}

\title
[Differentiating quantum stochastic cocycles]
 {How to differentiate 
\\ a quantum stochastic cocycle}

\author[Martin Lindsay]{J.\ Martin Lindsay}
\address{Department of Mathematics and Statistics,
Lancaster University, Lancaster LA1 4YF, U.K.}
\email{j.m.lindsay@lancaster.ac.uk}

\dedicatory{Dedicated to Robin Hudson on his seventieth birthday}

\subjclass[2000]{Primary 46L53, 81S25; Secondary 47D06}
 
 \keywords{Noncommutative probability, quantum stochastic cocycle, $E_0$-semigroup,
 CCR flow, holomorphic semigroup}

 \begin{abstract}
 Two new approaches to the infinitesimal characterisation of quantum
 stochastic cocycles are reviewed.
 The first concerns mapping cocycles on an operator space and
 demonstrates the role of H\"older continuity;
 the second concerns contraction operator cocycles on a Hilbert space and shows
 how holomorphic assumptions yield cocycles enjoying an infinitesimal
 characterisation which goes beyond the scope of quantum stochastic
 differential equations.
 \end{abstract}

\maketitle

\section*{Introduction}
\label{section: introduction}

The advent of the Hudson-Parthasarathy quantum stochastic calculus
coincided with the beginning of Arveson's study of product systems
of Hilbert spaces.
 The former is set in symmetric Fock space over an $L^2$-space of
 vector-valued functions (\cite{HuP}); the latter has the continuous tensor
 product decomposition of such Fock spaces
\begin{equation*}
 \Fock = \Fock_{[0,t[} \ot \Fock_{[t,\infty[}, \quad
 t\in\Rplus,
\end{equation*}
 as its paradigm example (\cite{ArvMem}).
At a conference during the 1986-7 Warwick Symposium on Operator Algebras,
Arveson raised the following question.
 Noting that randomising a one-parameter unitary group
$(U_x)_{x\in\Real}$ on a Hilbert space $\init$ using a Brownian
motion $(B_t)_{t\geq 0}$:
\[
V_t \xi: \omega \to U_{B_t(\omega)}\xi(\omega),
 \quad t\in\Rplus, \xi\in\init\ot L^2(\Omega),
\]
defines a family of unitaries $V = (V_t)_{t\geq 0}$ on
 $L^2(\Omega; \init) = \init \ot L^2(\Omega)$
 satisfying a cocycle identity with respect to the shift on Brownian
 paths:
 \begin{equation*}
V_{r+t} = V_r \sigma_r(V_t), \quad r,t\in\Rplus,
 \end{equation*}
 he asked what other ways may such cocycles be generated --- are
 they all of this type?
 Armed with various quantum martingale representation theorems
 (\cite{HL non-Fock}, \cite{HLP}),
 the latter obtained in collaboration with Parthasarathy at the 1984-5 Warwick Symposium on
 Stochastic Differential Equations, Robin and I were able to provide an immediate answer.
 Our answer was a qualified `yes', if the process of randomisation is broadened to involve
 \emph{quantum Brownian motion} (\cite{RobinQBM})
  along with the Poisson-type process derived from differential second quantisation,
  if adaptedness to the corresponding filtration of \emph{operator algebras} is imposed,
  and if one assumes sufficient regularity for the cocycle.
 The setting is \emph{Wiener space} $L^2(\Omega)$, with its
 semigroup of shifts inducing the semigroup
 $(\sigma_t)_{t\geq 0}$ on $B(L^2(\Omega))$
 now called \emph{CCR flow}. This is translated to symmetric
 Fock space $\Fock$ over $L^2(\Rplus)$ via the Wiener-Segal-It\^o
 isomorphism (see, for example~\cite{jml Greifswald}).

 \begin{thm}[\cite{HuL}]
 \label{HuL theorem}
 Let $V = (V_t)_{t\geq 0}$ be a unitary quantum
 stochastic cocycle on $\init$, and suppose that $V$ is Markov-regular.
 Then there is a unique operator
 $F
  \in B(\init \op \init)$
 such that $V$ satisfies the quantum stochastic differential equation
 \begin{equation*}
dV_t = V_t d\Lambda_F(t), \  V_0 = I_{\init\ot\Fock}.
  \end{equation*}
\end{thm}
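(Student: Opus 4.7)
The plan is to derive the QSDE by applying a quantum martingale representation theorem on the exponential-vector domain, and then by using the cocycle identity to force the integrand into its required form.

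As a first step, I would define the associated vacuum semigroup $P$ on $\init$ through the matrix elements $\ip{u}{P_t v} := \ip{u\ot\e{0}}{V_t\,v\ot\e{0}}$ for $u,v\in\init$. The cocycle identity $V_{r+t}=V_r\sigma_r(V_t)$, combined with the factorisations of $\Fock$ and of the vacuum vector across $\Rplus=[0,r[\,\cup\,[r,\infty[$, yields $P_{r+t}=P_rP_t$, and Markov-regularity is precisely the statement that $P$ is norm-continuous; its generator $G$ is therefore a bounded operator on $\init$.

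Next, I would write the matrix elements $\ip{u\ot\e{f}}{V_t\,v\ot\e{g}}$ as a semigroup-type ``drift'' contribution (controlled by $G$ together with the test functions $f$, $g$) plus a quantum martingale. The martingale representation theorem of \cite{HLP}, applied on the algebra generated by exponential vectors, expresses that martingale as a sum of four fundamental quantum stochastic integrals against $dA^\dagger$, $d\Lambda$, $dA$ and $dt$, with adapted operator-valued integrands.

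The key technical step, and the main obstacle, is the identification of those integrands. Using the cocycle identity in the form $V_{s+t}-V_s = V_s\bigl(\sigma_s(V_t)-I\bigr)$, together with the independence of noise increments on disjoint intervals and the adaptedness of $V$, I would force each integrand at time $s$ to factor as $V_s$ composed with a time-independent operator on $\init$; the Markov-regular boundedness of $G$ feeds in precisely here, guaranteeing that all four coefficients are bounded. Assembling the resulting operators as the four entries of a $2\times 2$ block operator $F\in B(\init\op\init)$ then yields the QSDE $dV_t = V_t\,d\Lambda_F(t)$ with the stated initial condition.

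Uniqueness of $F$ is finally straightforward: two candidate generators giving rise to the same cocycle would agree on all matrix elements $\ip{u\ot\e{f}}{V_t\,v\ot\e{g}}$; differentiating at $t=0$ and varying $u,v,f,g$ over a total set produces a linear system in the four block entries of $F$ that is readily seen to be invertible, so the four entries are uniquely determined by $V$.
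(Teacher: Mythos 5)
Your proposal follows essentially the same route as the paper: Markov-regularity gives a bounded generator $K$ of the expectation semigroup, subtracting the corresponding drift from $V$ yields an operator-valued quantum martingale which is regular in the sense of Parthasarathy--Sinha and hence representable as a sum of fundamental quantum stochastic integrals with bounded adapted integrands, and the cocycle identity (with unitarity) then shows that each integrand factors as $V_s$ composed with a constant operator, assembling into $F$. The only point worth flagging is that the representation theorem is applied to the operator-valued martingale itself (after verifying its regularity), not merely to its matrix elements, but this is a presentational rather than a substantive difference.
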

(All the terms used here will be defined in Section~\ref{section:
QSC}). By the quantum It\^o formula, the coefficient $F$ necessarily
satisfies the Hudson-Parthasarathy unitarity conditions $q(F) = 0 =
r(F)$, where
\begin{equation*}
q(F) := F^* + F + F^* \Delta F \text{ and }
 r(F):= F + F^* + F \Delta F^*,
\end{equation*}
 and $\Delta$ denotes the quantum It\^o projection
 $\left[\begin{smallmatrix}0 &  \\  & I_{\init}\end{smallmatrix}\right]
 \in B(\init \op \init)$.
 In terms of its block matrix form
 and the component quantum stochastic integrators,
 the unitarity conditions on $F$ read
\[
F = \begin{bmatrix} iH - L^*L/2 & -L^*C \\ L & C-I_{\init} \end{bmatrix},
\]
 with $C$ being unitary, $H$ selfadjoint and $L$ arbitrary, and 
 the quantum stochastic differential equation reads
\[
dV_t =  V_t\big( L\ot I_\Fock\, dA^* + (C-I_{\init})\ot I_\Fock\, dN_t  - 
 L^*C \ot I_\Fock\, dA_t + (iH- L^*L/2)\ot I_\Fock\, dt \big),
\]
 with
 $(A^*_t)_{t\geq 0}$, $(N_t)_{t\geq 0}$ and $(A_t)_{t\geq 0}$ being
 respectively the
 \emph{creation}, \emph{preservation}
 (\emph{number, exchange} or \emph{guage}) and \emph{annihilation
 processes}.

 The result was proved as follows.
 Letting $K$ denote the generator of the expectation
 semigroup of $V$, $K$ is bounded (by Markov-regularity) and
 \[
 X := \Big( V_t - I_{\init\ot\Fock} - \int_0^t V_s (K\ot I_\Fock) ds \Big)_{t\geq 0}
 \]
defines a quantum martingale: $\mathbb{E}_s[X_t] = X_s$ ($s\leq t$),
satisfying $X_0 = 0$, which may be shown to be \emph{regular} in the
sense of Parthasarathy and Sinha (\cite{PaS})
 so that 
\[
 X_t = \Lambda_t(G) = 
 \int_0^t   L'_t \, dA^* + \int_0^t  (C'_t-I_{\init\ot\Fock})\, dN_t +  \int_0^t  M'_t \, dA_t ,
\]
where
$G = \Big(
\left[\begin{smallmatrix}
 0 & M'_t \\ L'_t & C'_t - I_{\init\ot\Fock}
 \end{smallmatrix}\right]\Big)_{t\geq 0}$
for bounded processes $L'$, $M'$ and $C'$.
 The proof is completed by verifying that each of the processes
 \[
(V_t^* L'_t)_{t\geq 0}, \ (V_t^* M'_t)_{t\geq 0} \text{ and } (V_t^*
C'_t)_{t\geq 0}
 \]
is (a.e.) constant.
 If the cocycle is instead adapted to the filtration of a non-minimal variance
 quantum Brownian motion then the same result holds (with no preservation integral)
 with similar proof, but using the martingale representation theorem
 for martingales with respect to this filtration (\cite{HL non-Fock}).

Journ\'e then gave a qualified `no' to Arveson's question.
 Along with an analysis of quantum stochastic cocycles $V$, assumed
 only to be strongly continuous, he gave an example to show that in
 general $V$ will \emph{not} satisfy a quantum stochastic
 differential equation.
 The essential point is that we may associate an operator $L$ to
 $V$ and, in favourable circumstances also an operator $C$,
 but the domains of $L$ and $L^*C$ may have insufficient
 intersection, thereby cheating us out of a dense domain for a 
 coefficient of a quantum stochastic
 differential equation.
 In his example $\init = L^2(\Rplus)$, $C=I_\init$ and $L$ is the generator of the right-shift
 semigroup on $\init$  (\cite{Journe}).

Bradshaw considered the corresponding question for quantum
stochastic mapping cocycles on a von Neumann algebra $\Nil$. Using
the quantum martingale representation theorem in a corresponding way
to the argument sketched above, he was able to show that every
Markov-regular, normal, injective, unital *-homomorphic cocycle on
$\Nil$ is governed by a quantum stochastic differential equation
(\cite{Bra}).

Subsequent developments in the construction and analysis of quantum
stochastic cocycles, up to the late nineties, are described
in~\cite{jml Greifswald} and~\cite{Fagnola}, both of which contain
extensive bibliographies.

 In this paper two new developments are described.
 The first, given in Section~\ref{section: Holder},
 is a direct and simple approach to differentiating
 mapping cocycles on an operator algebra, and more generally,
 on an operator space (\cite{jml Holder}).
 The basic assumption here is that there is an
 \emph{adjoint cocycle} and that both cocycles
 have locally H\"older continuous columns with exponent $1/2$.
 This analysis extends to quantum stochastic cocycles in Banach space and
 in abstract operator space (\cite{DasL}).
 The second, given in Section~\ref{section: holomorphic},
 is the case of contraction operator cocycles on a
 Hilbert space whose expectation semigroup is holomorphic
 (\cite{LSi holomorphic}).
 Here we go beyond quantum stochastic differential equations
 and yet still obtain a complete infinitesimal description of such
 cocycles.

 The definitions and basic properties
 of quantum stochastic cocycles, for both (contraction operator) cocycles on a
 Hilbert space and (mapping) cocycles on an operator space, are given
 in Section~\ref{section: QSC}. The latter
 requires \emph{matrix spaces} over an operator space (\cite{LW existence}); the basic facts
 about these are given in that section.

 \medskip
 \emph{Notations.}
 For a set $S$ and vector spaces $U$ and $V$ we write $F(S;V)$ for the
 linear space of functions from $S$ to $V$, under pointwise
 operations, and $L(U;V)$ for the space of linear maps from $U$ to $V$.
 For a vector-valued function $f: \Rplus \to V$ and subinterval $I$
 of $\Rplus$, $f_I$ denotes the function equal to $f$ on $I$ and
 zero outside $I$; for $c\in V$, $c_I: \Rplus \to V$ is defined in the same way, by
 viewing $c$ as a constant function.
 Simple tensors $u\ot \xi$ are usually abbreviated to $u\xi$.
 For a Hilbert space $\hil$ we set $\ket{\hil} = B(\Comp;\hil)$ and,
 mindful of the Riesz-Fr\'echet Theorem,
 $\bra{\hil} = B(\hil;\Comp)$, so that
 $\ket{\hil} = \{\ket{u}: u \in \hil\}$ and
 $\bra{\hil} = \{\bra{u}: u \in \hil\}$,
 where the -ket $\ket{u}$ maps $\lambda \in \Comp$ to $\lambda u$,
 and the -bra $\bra{u}$ maps $v\in\hil$ to $\ip{u}{v}$.
 The bra-/-ket notation is ampliated to the following
 useful $E$-\emph{notations}:
\begin{equation}
 \label{E-notations}
 E_\xi = I_\init \ot \ket {\xi}: \init \to \init \ot \hil, \text{ respectively }
 E^{\xi'} = I_\init \ot \bra {\xi}: \init'\ot\hil' \to \init'
\end{equation}
 for vectors $\xi$ and $\xi'$ from Hilbert spaces $\hil$ and $\hil'$.
 Ultraweak tensor products are denoted $\otol$ and purely algebraic
 tensor products by $\otul$. For an operator space $\Vil$ and
 Hilbert space $\hil$, $\iota_\hil^\Vil$ denotes the ampliation $x
 \in \Vil \mapsto x \ot I_\hil$.

 \medskip
\emph{Fix now, and for the rest of the paper, two Hilbert spaces}
 $\init$ \emph{and} $\noise$. Set $\khat:= \Comp \op \noise$ and, for
 $c\in\noise$, set $\chat:= \binom{1}{c} \in \khat$; also,
 for $f\in L^2(\Rplus;\noise)$ set $\wh{f}(t): = \wh{f(t)}$.
 We often make the identification
 \[
  \init \ot \khat =
 \init \op (\init\ot\noise),
 \]
 and employ the \emph{quantum It\^o projection}
 $\Delta :=
 \left[\begin{smallmatrix}0 &  \\  & I\end{smallmatrix}\right]
 \in B(\init\ot\khat) = B(\init \op (\init\ot \noise))$.

\section{Quantum stochastic cocycles}
 \label{section: QSC}

  For $0\leq r < t \leq \infty$, the
 symmetric Fock space over $L^2([r,t[;\noise)$ is denoted by $\Fock_{[r,t[}$
 and the identity operator on $\Fock_{[r,t[}$ by $I_{[r,t[}$;
 $\Fock_{\Rplus}$ is abbreviated to $\Fock$.
 We use normalised exponential vectors:
\[
 \vp(f) := e^{-\norm{f}^2/2} \ve(f),
 \quad
 f\in L^2(\Rplus;\noise),
\]
where $\ve(f)$ is the exponential vector $(1, f, (2!)^{-1/2}f^{\ot
2}, \cdots ) \in \Fock$.
 As is well-known, the family $\{\ve(f): f\in L^2(\Rplus; \noise)\}$ is linearly
 independent and total in $\Fock$. The following considerable
 strengthening of the latter property, due to Parthasarathy-Sunder for
 one dimensional $\noise$ and Skeide for general $\noise$, has
 proved very useful (see~\cite{jml Greifswald} for a proof). For a
 subset $S$ of $\noise$ define
 \[
 \Exps_S := \Lin \{ \ve(f): f \in \Step_S \}
 \]
 where
 $\Step_S$ :=
 \{$f \in L^2(\Rplus;\noise)$: $f$ is an $S$-valued step function\}
(with the convention that we always take the right-continuous
versions), and abbreviate $\Exps_\noise$ to $\Exps$.

 \begin{propn}
 \label{skeide}
 Let $\Til$ be a subset of $\noise$ containing $0$. Then the
 following are equivalent\tu{:}
 \begin{rlist}
 \item
 $\Til$ is total in $\noise$\tu{;}
 \item
 $\Exps_\Til$ is dense is $\Fock$.
 \end{rlist}
  \end{propn}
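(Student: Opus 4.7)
The direction (ii) $\Rightarrow$ (i) is immediate by contraposition: if $\Til$ is not total in $\noise$, choose a nonzero $c\in\noise$ orthogonal to $\Til$; then the one-particle vector $v := c\cdot 1_{[0,1[} \in L^2(\Rplus;\noise) \subset \Fock$ is nonzero and satisfies
\[
\langle v, \ve(g) \rangle = \langle c\cdot 1_{[0,1[}, g \rangle_{L^2} = \int_0^1 \langle c, g(t)\rangle_{\noise}\,dt = 0
\]
for every $g \in \Step_\Til$, so $\Exps_\Til$ is not dense in $\Fock$.

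For the substantive direction (i) $\Rightarrow$ (ii), my plan is to show that the $n$-particle subspace $\Fock^{(n)}$ lies in $\ol{\Exps_\Til}$ for each $n \geq 0$; since $\bigoplus_n \Fock^{(n)}$ is dense in $\Fock$, this yields (ii). The case $n=0$ is immediate: $\ve(0) \in \Exps_\Til$ because $0 \in \Til$. For $n\geq 1$, the key construction is, given $c_1,\ldots,c_n \in \Til$ and pairwise disjoint subintervals $J_1,\ldots,J_n$ of $\Rplus$, the inclusion--exclusion sum
\[
V(c_\bullet; J_\bullet) := \sum_{S \subseteq \{1,\ldots,n\}} (-1)^{n-|S|}\, \ve\Bigl(\sum_{j \in S} c_j 1_{J_j}\Bigr) \in \Exps_\Til.
\]
Using the continuous tensor factorisation $\Fock = \Fock_{J_1} \otimes \cdots \otimes \Fock_{J_n} \otimes \Fock_{\mathrm{rest}}$ associated with $\Rplus = J_1 \sqcup \cdots \sqcup J_n \sqcup \mathrm{rest}$, together with the identity $\prod_j(x_j - y_j) = \sum_S (-1)^{n-|S|} \prod_{j \in S} x_j \prod_{j \notin S} y_j$, this telescopes to
\[
V(c_\bullet; J_\bullet) = \bigotimes_{j=1}^n \bigl(\ve(c_j 1_{J_j}) - \ve(0)_{J_j}\bigr) \otimes \ve(0)_{\mathrm{rest}}.
\]
Each factor lies in $\Fock_{J_j}^{(\geq 1)}$ with leading one-particle term $c_j 1_{J_j}$, so $V(c_\bullet; J_\bullet)$ has components only in $\Fock^{(m)}$ for $m \geq n$, and its $\Fock^{(n)}$-part is a fixed scalar multiple of the symmetric tensor product $c_1 1_{J_1} \otimes_s \cdots \otimes_s c_n 1_{J_n}$.

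To kill off the $\Fock^{(m)}$-components with $m>n$, subdivide each $J_j$ into $N$ equal sub-intervals $J_j^{(1)},\ldots,J_j^{(N)}$ and form
\[
V_N := \sum_{k_1,\ldots,k_n = 1}^N V(c_1, J_1^{(k_1)}; \ldots; c_n, J_n^{(k_n)}) \in \Exps_\Til.
\]
Multilinearity of the symmetric tensor product collapses $V_N^{(n)}$ to the $N$-independent vector proportional to $c_1 1_{J_1} \otimes_s \cdots \otimes_s c_n 1_{J_n}$, while for $m>n$ the summands $V^{(m)}_{k_\bullet}$ are pairwise orthogonal (given $k_\bullet \neq k'_\bullet$ differing in coordinate $j_0$, the region $J_{j_0}^{(k_{j_0})}$ carries a non-vacuum factor in one summand and the vacuum in the other, annihilating the inner product), and each has squared norm $O(N^{-m})$ because the intervals have length $|J_j|/N$. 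Summing the $N^n$ orthogonal contributions yields $\|V_N^{(m)}\|^2 = O(N^{n-m}) \to 0$, so $V_N \to c_1 1_{J_1} \otimes_s \cdots \otimes_s c_n 1_{J_n}$ in $\Fock$, placing this symmetric tensor in $\ol{\Exps_\Til}$. A density argument then finishes: totality of $\Til$ makes $\Til$-valued step functions dense in $L^2(\Rplus;\noise)$, and continuous multilinearity of the symmetric tensor product propagates this density to $\Fock^{(n)}$, while coincident-interval cases reduce to the disjoint-interval case by one further subdivision (the diagonal contributions having squared norm $O(1/N) \to 0$).

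The main technical obstacle is the orthogonality-and-norm estimate $\|V_N^{(m)}\|^2 = O(N^{n-m})$ for $m>n$: it requires a careful tracking of the continuous tensor factorisation of each $V_{k_\bullet}$, in particular the observation that a single differing coordinate forces a non-vacuum/vacuum mismatch in the relevant factor. The remaining steps are standard multilinearity and density arguments.
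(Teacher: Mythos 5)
The paper does not contain its own proof of this proposition: it attributes the result to Parthasarathy--Sunder (one-dimensional $\noise$) and Skeide (general $\noise$) and refers the reader to~\cite{jml Greifswald}, so there is nothing in the source to compare against line by line. Your argument is the standard one for this result --- differences of exponential vectors over disjoint intervals, telescoped through the continuous tensor factorisation, with subdivision to suppress the particle-number components above $n$ --- and it is sound: the orthogonality of the $m$-particle parts of $V_{k_\bullet}$ and $V_{k'_\bullet}$ for $k_\bullet\neq k'_\bullet$ and the estimate $\|V_N^{(m)}\|^2=O(N^{n-m})$ (indeed, summably in $m$, since $\sum_{m>n}\|V_{k_\bullet}^{(m)}\|^2=\prod_j(e^{a_j}-1)-\prod_j a_j=O(N^{-n-1})$ with $a_j=\|c_j\|^2|J_j|/N$) go through exactly as you describe. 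One phrase should be corrected: totality of $\Til$ does \emph{not} make the set $\Step_\Til$ itself dense in $L^2(\Rplus;\noise)$ --- for $\Til=\{0,1\}$ in $\noise=\Comp$ the $\Til$-valued step functions are far from dense --- but what your multilinearity step actually requires is only that the \emph{linear span} of $\{c1_J: c\in\Til,\ J\ \text{a bounded interval}\}$ be dense in $L^2(\Rplus;\noise)$, which does follow from totality of $\Til$; with that rewording the closing density argument, and hence the proof, is complete.
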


 Write $W(f)$ ($f\in L^2(\Rplus;\noise)$) for the unitary
 (\emph{Weyl}) operator on $\Fock$ determined by
 \[
  W(f) \vp(g) = e^{-i \im\ip{f}{g}} \vp(f+g), \quad
  g\in L^2(\Rplus;\noise),
 \]
$R_t$ ($t\in\Rplus$) for the unitary (\emph{time-reversal}) operator
on $\Fock$ determined by
\[
 R_t \vp(g) = \vp(r_tg) \text{ where }
 (r_t g)(s) = \begin{cases} g(t-s) & s \in [0,t[, \\ g(s) & s \in
 [t,\infty[, \end{cases}
  \quad
  (g\in L^2(\Rplus;\noise)),
\]
$S_t$ ($t\in\Rplus$) for the isometric (\emph{shift}) operator on
$\Fock$ determined by
\[
 S_t \vp(g) = \vp(s_t g) \text{ where }
 (s_t g)(r) = \begin{cases} 0 & r \in [0,t[, \\ g(r-t) & r \in
 [t,\infty[, \end{cases}
  \quad
  (g\in L^2(\Rplus;\noise)),
\]
$\sigma_t$ ($t\in\Rplus$) for the *-homomorphic (\emph{shift})
operator on $B(\init \ot \Fock)$ determined by
\[
 \bip{u\vp(f)}{\sigma_t(T)v\vp(g)} =
 \ip{\vp(f_{[0,t[})}{\vp(g_{[0,t[})}
 \bip{u\vp(s_t^*f)}{T\, v\vp(s^*_t g)},
\]
 ($T\in B(\init\ot\Fock)$, $f,g\in L^2(\Rplus;\noise)$, $u,v\in\init$),
and $\mathbb{E}$ for  the completely positive and contractive
(\emph{expectation}) map
\[
\mathbb{E} = \id_{B(\init)} \otol \omega_{\ve(0)} : B(\init\ot\Fock)
\to B(\init).
\]
By a \emph{quantum stochastic contraction cocycle on} $\init$
\emph{with noise dimension space} $\noise$, we mean a family of
contraction operators $V = (V_t)_{t\geq 0}$ on $\init\ot\Fock$
satisfying
\begin{align*}
 &s \mapsto V_s \text{ is strongly continuous}, \\
 &V_t \in B(\init\ot\Fock_{[0,t[}) \ot I_{[t,\infty[}, \\
 &V_{r+t} = V_r \sigma_r(V_t) \text{ and } V_0 = I_{\init\ot\Fock},
 \quad
 r,t\in\Rplus.
\end{align*}
 Let $\QScChk$ denote the collection of such cocycles.
 For $V\in\QScChk$,
 \[
 P^{0,0} :=
 \big(\mathbb{E}[V_t]\big)_{t\geq 0}
 \]
defines a contractive $C_0$-semigroup on $\init$ called the
\emph{expectation semigroup} of $V$.
 The semigroup property follows from the identities
\[
\mathbb{E} = \mathbb{E} \circ \mathbb{E}_r \ \text{ and } \
 \mathbb{E}[V_r(T\ot I_\Fock)] = \mathbb{E}[V_r] T,
 \quad
 r\in\Rplus, T \in B(\init),
\]
 for the conditional expectation maps
 \[
 \mathbb{E}_r: B(\init\ot\Fock) \to  B(\init\ot\Fock), \
 X \mapsto
 \big( \id_{ B( \init \ot \Fock_{[0,r[} ) } \otol \omega_{ \ve( 0_{ [r,\infty[} ) } \big) (X) \ot I_{[r,\infty[},
 \quad r\geq 0.
 \]
The continuity of $ P^{0,0}$ corresponds precisely to the strong (equivalently, weak operator)
 continuity assumption on $V$ (\cite{LW camb}, Lemma 1.2).
 The expectation semigroup is just one of the family
of \emph{associated semigroups} of $V$:
\[
\Pcd: =
 \big( (\id_{B(\init)} \otol
 \omega_{\vp(c_{[0,t[}),\vp(d_{[0,t[})})(V_t) \big)_{t\geq 0},
 \quad c,d\in\noise,
\]
 which together determine $V$ through the \emph{semigroup representation}
 (\cite{LW jfa}, Proposition 6.2).
 In fact any family $\{ P^{c,d}: c\in\Til', d\in\Til\}$
 where $\Til$ and $\Til'$ are each total in $\noise$ and contain $0$ suffices,
 thanks to Proposition~\ref{skeide}.
 This is important since, for example, if $\noise$ is one-dimensional
 then one may take $\Til = \{0,1\}$ and $V$ is then determined by
 its expectation semigroup together with just three of its other
 associated semigroups.

Write $\QScCMrhk$ for the subclass of \emph{Markov-regular} QS
contraction cocycles, that is those whose associated semigroups are
all norm-continuous. Contractivity of $V$ actually implies that
Markov-regularity is equivalent to norm-continuity of just the
expectation semigroup (\cite{LW jfa}, Theorem 6.6).

\begin{example}[Weyl cocycles]
 For $c\in\noise$,
 \[
W^c := \big( I_\init \ot  W(c_{[0,t[})\big)_{t\geq 0}
 \]
defines a Markov-regular QS contraction cocycle;
 $\mathbb{E}[W^c_t] = e^{-t\norm{c}^2/2} I_\init$.
\end{example}

 Two useful constructions of new cocycles from old are as follows.
 Let $V\in\QScChk$. Then the \emph{dual cocycle}  $\wt{V}$ is given
 by
\[
\wt{V}_t := \big( R_t V_t^* R_t \big)_{t\geq 0};
\]
 its associated semigroups are given, in terms of the associated
 semigroups $\{\Pcd: c,d\in\noise \}$ of $V$, by
 $\wt{P}_t^{c,d} = (P_t^{d,c})^*$; in particular, $\wt{V}$ is
 Markov-regular if and only if $V$ is. The \emph{associated
 cocycles} of $V$ are defined by
 \[
V^{c,d} := \big( (W^c_t)^* V_t W^d_t \big)_{t\geq 0},
 \quad c,d\in\noise.
 \]
 It is easily verified that these are indeed QS contraction cocycles; they
 are all Markov-regular if $V$ is, and the $(c,d)$-associated semigroup
 of $V$ is precisely the expectation semigroup of the $(c,d)$-associated
 cocycle of $V$.

\begin{example}
 Let $U = (U_x = e^{ixH})_{x\in\Real}$ be a strongly continuous
 one-parameter unitary group on $\init$ and let $(B_t)_{t\geq 0}$ be
 the standard Wiener process. Then
 \[
 (V_tF)(\omega) = U_{B_t(\omega)}F(\omega), \quad F \in
 L^2(\Omega;\init) = \init \ot L^2(\Omega),
 \]
 defines a family of unitaries $V = (V_t)_{t\geq 0}$ on
 $\init\ot L^2(\Omega)$ or, under the Wiener-Segal-It\^o
 isomorphism, on $\init\ot\Fock$ (with $\noise = \Comp$) which
 comprises a QS contraction cocycle with expectation semigroup
 $(e^{-tH^2/2})_{t\geq 0}$. This is the example highlighted by
 Arveson; it is Markov-regular if and only if $H$ is bounded, in
 other words $U$ is norm-continuous.
\end{example}

Processes other than Brownian motion may be used and multidimensions
can easily be incorporated. Here we are interested in the general
structure of subclasses of $\QScChk$ and this involves
\emph{quantum} processes, as we have seen. For many examples of
cocycles arising from quantum optics and classical probability
see~\cite{BHS},~\cite{Fagnola},~\cite{LW Indian}, and references
therein.

So far we have only considered contraction operator cocycles on a
Hilbert space. Given $V\in\QScChk$,
\begin{equation}
 \label{induced cocycle}
 k^V_t: x \in B(\init) \mapsto
 V_t (x \ot I_\Fock) V_t^* \in (\init \ot \Fock),
 \quad t\in\Rplus,
\end{equation}
defines a family of completely positive contractions $k^V =
(k^V_t)_{t\geq 0}$ enjoying the cocycle relation
\[
k_{r+t} = \wh{k}_r \circ \sigma_r \circ k_t, \quad r,t\in\Rplus,
\]
where $\wh{k}_r$ is the natural extension of $k_r$ to a map
\[
 \Ran \sigma_r = B(\init) \ot I_{[0,r[} \ot B(\Fock_{[r,\infty[}) \to
 B(\init) \ot B(\Fock_{[0,r[}) \ot B(\Fock_{[r,\infty[}) =
 B(\init \ot \Fock).
\]
Note that the \emph{induced cocycle} $k^V$ is unital if and only if
$V$ is coisometric, and is homomorphic if $V$ is partially isometric
with $V^*_tV_t \in I_\init \ot B(\Fock)$ ($t\geq 0$), in particular
if $V$ is isometric. Partially isometric and projection-valued 
 QS cocycles are analysed in [$\text{W}\!_{1,2}$].
 Note also that isometry for $V$ is equivalent to unitality for
 $k^{\wt{V}}$.

 In generalising the above class of cocycles, in particular to
 non-inner cocycles, it is convenient to drop the
 contractivity/boundedness condition. Fortunately the cocycle
 identity is easily expressible in terms of certain `slices' of the maps
 $(k_t)_{t\geq 0}$ as follows:
 \begin{equation}
  \label{weak cocycle}
 \kappa^{f,g}_{r+t} = \kappa^{f,g}_{r} \circ \kappa^{S^*_r f,S^*_r g}_{t},
 \quad r,t\in\Rplus,
 \end{equation}
 where $\kappa^{f,g}_{t}(x) :=
 \big(\id_{B(\init)}\otol \omega_{\xi,\eta}\big)\circ k_t$
 with $\xi=\vp(f_{[0,t[})$ and $\eta = \vp(g_{[0,t[})$,
 and this requires only that each $k_t(x)$ is an operator whose
 domain includes sufficiently many vectors of the form $u\vp(h)$ and
 that the resulting operators $\kappa^{f,g}_{t}(x)$ are bounded.

 A \emph{concrete operator space} is a closed subspace $\Vil$ of $B(\Hil;\Kil)$
 for some Hilbert spaces $\Hil$ and $\Kil$;
 we speak of $\Vil$ being an operator space \emph{in} $B(\Hil;\Kil)$, or
 $B(\Hil;\Kil)$ being the \emph{ambient} (\emph{full operator}) \emph{space} of $\Vil$.
 The adjoint operator space $\Vil^\dagger$  is the operator space $\{x^*: x \in \Vil\}$
 in  $B(\Kil;\Hil)$, and the
 adjoint map of a linear map between operator spaces $\phi:
 \Vil\to\Wil$ is the map $\phi^\dagger: \Vil^\dagger \to
 \Wil^\dagger$, $x^* \mapsto \phi(x)^*$.
 A basic notion is that of \emph{complete boundedness}
 for a linear map between operator spaces  $\phi:\Vil\to\Wil$.
 This means that the norms of the matrix liftings
 \begin{equation}
 \label{lifting}
 \phi^{(n)}:  [v_{i,j}] \mapsto [\phi(v_{i,j})],
 \end{equation}
 in which $M_n(\Vil)$ has the norm inherited from $B(\Hil;\Kil)$,
 are uniformly bounded:
\[
 \cbnorm{\phi} := \sup_{n\in\Nat} \norm{ \phi^{(n)} } < \infty.
\]
There are several excellent texts on operator space theory
(\cite{BlM}, \cite{EfR}, \cite{Pau}, \cite{Pis}).

 Now let $\Vil$ be an operator space in $B(\init;\init')$.
 For any Hilbert spaces $\hil$ and $\hil'$, the $\hil$-$\hil'$
 \emph{matrix space over} $\Vil$ is defined as follows:
 \[
 \Vil \otM B(\hil;\hil') :=
 \big\{ T \in B(\init\ot\hil;\init'\ot\hil'):
 \forall_{\omega\in B(\hil;\hil')_*}\
 (\id_{B(\init;\init')}\otol\omega)(T)\in \Vil \big\}
 \]
 (\cite{LW existence}).
 Recalling the $E$-notations~\eqref{E-notations}, a
 convenient characterisation, in terms of any total subsets $S$
 and $S'$ of $\hil$ and $\hil'$, is as follows:
\[
 \Vil \otM B(\hil;\hil') =
 \big\{ T \in B(\init\ot\hil;\init'\ot\hil'):
 \forall_{\xi\in S, \xi'\in S'}\
 E^{\xi'} T E_{\xi} \in \Vil \big\}.
 \]
 Thus $\Vil \otM B(\hil;\hil')$ is an operator space in
 $B(\init\ot\hil;\init'\ot\hil')$.

 \emph{Warning}.\
 For a $\Cstar$-algebra $\Al$, the operator space $\Al\otM B(\init)$
 \emph{need not} be a $\Cstar$-algebra (see~\cite{LW existence} for an example).

 The lifting of maps $\phi$ between operator spaces to maps between
 matrices over the operator spaces~\eqref{lifting}
 extends to matrix spaces as follows. Let $\phi \in CB(\Vil;\Wil)$
 for concrete operator spaces $\Vil$ and $\Wil$, then there is a
 unique map
 \[
 \phi \otM \id_{B(\hil;\hil')} :
 \Vil \otM B(\hil;\hil') \to \Wil \otM B(\hil;\hil')
 \]
satisfying
 \[
 E^{\xi'} (\phi \otM \id_{B(\hil;\hil')}\big)(T) E_\xi =
 \phi(E^{\xi'} T E_\xi),
 \quad
 T\in \Vil\otM B(\hil;\hil'), \xi\in\hil, \xi'\in\hil',
 \]
 moreover, $\phi \otM \id_{B(\hil;\hil')}$ is a completely bounded
 operator with cb-norm at most $\cbnorm{\phi}$
 (\cite{LW existence}).
 If $B(\hil;\hil')$ is finite dimensional then $\phi$ need only be
 bounded for $\phi \otM \id_{B(\hil;\hil')}$ to exist; in this case
 the matrix lifting is a bounded operator.
 To handle both situations conveniently we coined the term
 $\hil$-\emph{boundedness} for a linear map $\phi:\Vil\to\Wil$,
 meaning bounded/completely bounded according as $\hil$ is
 finite-dimensional/infinite-dimensional, and write
 $\hil$-$B(\Vil;\Wil)$ for this class of maps.

\begin{term} 
A \emph{QS cocycle on} $\Vil$ \emph{with noise dimension space}
 $\noise$ \emph{and exponential domain} $\Exps_\Til$ (where $\Til$
 is a total subset of $\noise$ containing $0$)
  is a family $k = (k_t)_{t\geq 0}$ of linear maps
\[
 k_t : \Vil \to L(\init \otul \Exps_\Til; \init' \ot\Fock)
 \cong
  L\big( \Exps_\Til; L( \init;\init' \ot\Fock ) \big)
\]
satisfying
\[
 E^{\ve(f)} k_t(x) E_{\ve(g)} \in \Vil,
\]
the \emph{weak cocycle relation}~\eqref{weak cocycle} and the
\emph{adaptedness condition}
\[
 \ip{u'\ve(f)}{k_t(x)u\ve(g)} =
 \ip{\ve(f_{[t,\infty[})}{\ve(g_{[t,\infty[})}
 \ip{u'\ve(f_{[0,t[})}{k_t(x)u\ve(g_{[0,t[})}
\]
 ($x\in\Vil$, $u\in\init$, $u'\in\init'$, $g\in\Step_\Til, f\in \Step$, $t\in\Rplus$).
%
 The collection of such cocycles is denoted $\QSCVET$.
 The subclass of cocycles $k\in\QSCVET$ having an \emph{adjoint
 cocycle} $k^\dagger\in\QSCVdaggerETprime$, so that
 \[
 E^{\ve} k^\dagger(x^*)E_{\ve'} = (E^{\ve'} k(x)E_{\ve})^*,
 \quad
 x\in\Vil, \ve\in\Exps_\Til, \ve' \in \Exps_{\Til'},t\in\Rplus,
 \]
is denoted $\QSCddaggerVETprimeET$.
\end{term}
 A cocycle $k$ has $\hil$-\emph{bounded columns} if, in the notation
 $k_{t,\ve} := k_t(\cdot)E_\ve: \Vil\to L(\init; \init'\ot\Fock)$, it satisfies
 \begin{itemize}
\item
 $k_{t,\ve}(\Vil)
 \subset \Vil\otM\ket{\Fock}$, and
 \item
 $k_{t,\ve}$ is $\hil$-bounded $\Vil \to \Vil\otM\ket{\Fock}$, \
 $t\in\Rplus, \ve\in\Exps_\Til$.
  \end{itemize}
 We say that a cocycle $k$ is $\hil$-\emph{bounded} if it satisfies the stronger conditions
 \begin{itemize}
\item
 $k_{t}(\Vil) \subset \Vil\otM B(\Fock)$, and
\item
 $k_{t}$  is  $\hil$-bounded $\Vil \to \Vil\otM B(\Fock),
 \ t\in\Rplus$.
 \end{itemize}

 Mapping cocycles $k\in \QSCVET$ also have associated semigroups:
 \[
 \calPcd := \big( E^{\vp(c_{[0,t[})} k_t(\cdot) E_{\vp(d_{[0,t[})}
 \big)_{t\geq 0},
 \quad
 c\in\noise, d\in\Til,
 \]
 and, as for operator cocycles, the collection of associated
 semigroups determines the cocycle --- thanks to
 Proposition~\ref{skeide}.
 For a thorough investigation of the reconstructability of cocycles
 from compatible families of semigroups
 see ~\cite{LW cb1}, which was inspired by~ \cite{AcK}.

 We now introduce the continuity condition which plays the central
 role in Section~\ref{section: Holder}.
 Let $\QSChbHcVET$ denote the class of cocycles $k\in \QSCVET$
 having $\hil$-bounded columns and such that
 \[
 t \mapsto k_{t,\ve}
 \text{ is H\"older }\frac{1}{2}\text{-continuous }
 \Rplus \to \hil\text{-}B(\Vil;\Vil\otM \ket{\Fock}) \text{ at } t=0,
 \]
and let $\QSCddaggerhbHcVETprimeET$ denote the subclass
\[
\Big\{ k \in \QSCddaggerVETprimeET \cap \QSChbHcVET : k^\dagger \in
\QSChbHcVdaggerETprime \Big\}.
\]
 We refer to elements of  $\QSCkhatbHcVET$  as \emph{H\"older cocycles}.
 The reason for highlighting this class of cocycle
 is hinted at in the observation~\eqref{further property}
 and fully justified in Theorem~\ref{four bijections}.
 In  this connection, note the elementary estimate
 \begin{equation}
 \label{Weyl Holder}
 \norm{ W^c_{t,\ve(0)} - W^c_{r,\ve(0)} }  =
 \norm{c} |t-r|^{1/2} + O(t-r) \text{ as } t\to r,
 \quad
 c\in\noise.
\end{equation}

We again call cocycles $k \in \QSCVET$ \emph{Markov-regular}
 (respectively, \emph{cb-Markov-regular}) when each associated
 semigroup is norm-continuous (respectively,  cb-norm-continuous),
 and we have the inclusions:
 \[
 \QSChbHcVET \subset \QSCMrVET, \text{ and }
 \QSCcbHcVET \subset \QSCcbMrVET.
 \]
Note that 
 the prescription
\vspace{-0.1cm}
\begin{equation}
 \label{correspondences}
 k_t(\ket{u}) =
 V_t \big( \ket{u}\ot I_\Fock \big)
\end{equation}
delivers a healthy traffic between the completely contractive
mapping cocycles
$k\in\mathbb{QSC}^\ddagger(\ket{\init}:\Exps,\Exps)$ and the
contraction operator cocycles $V\in\QScChk$.
 The respective associated semigroups are related by
 \[
  \calPcd_t (\ket{u}) = \ket{\Pcd_t u} =  \Pcd_t\ket{ u},
 \quad
 c,d\in\noise, t\in\Rplus.
\]
The basic contraction mapping cocycles defined by
\[
 k_t(\ket{u}) = W^c_t \big( \ket{u}\ot I_\Fock \big)
\]
lie in
$\mathbb{QSC}^\ddagger_{\mathrm{cbHc}}(\ket{\init}:\Exps,\Exps)$,
 as follows from~\eqref{Weyl Holder}.

\section{H\"older continuous cocycles}
 \label{section: Holder}

\emph{For this section we fix an operator space} $\Vil$ \emph{in}
$B(\init;\init')$. We first discuss the generation of QS cocycles on
$\Vil$ by means of QS differential equations, and then we
characterise various classes of cocycle so-generated. The main
result (Theorem~\ref{four bijections}) is the converse to the
existence theorem established in~\cite{LW existence}.
 Further details and full proofs
 will apear in the forthcoming paper~\cite{jml Holder}.

 Let $\varphi \in
 F(\{1_\Comp\}\times\Til;
 \khat\text{-}B(\Vil; \Vil\otM \ket{\khat})$
 for a total subset $\Til$ of $\noise$ containing $0$.
 Then, for any total subset $\Til'$ of $\noise$ containing $0$, the
 QS differential equation

\begin{equation}
\label{QSDEmapping}
 dk_t = k_t \circ d \Lambda_\varphi(t)
 \quad
 k_0 = \iota^\Vil_\Fock,
\end{equation}
has a unique
 $(\Exps_{\Til'}, \Exps_{\Til})$-weakly regular, $(\Exps_{\Til'}, \Exps_{\Til})$-weak
 solution, denoted $k^\varphi$.

 Usually coefficients of QS differential equations are assumed to be
 \emph{linear}:
 $\phi \in L(\wh{\Dil}; \khat\text{-}B(\Vil;\Vil\otM \ket{\khat})$
 for a dense subspace $\Dil$ of $\noise$. However, bare QS differentiation of cocycles
 only yields coefficients $\varphi$ which are functions defined on
 $\{1_\Comp\}\times\Til := \{\chat : c\in\Til\}$.
 As we shall see, linearity is recovered under the assumption that
 the cocycle also has a H\"older adjoint cocycle.

 Here
 $(\Exps_{\Til'}, \Exps_{\Til})$-\emph{weak solution} means
 \begin{align*}
 &k_t \in L\big(\Vil; L(\init \otul \Exps_\Til;\init'\ot\Fock)\big),
 E^{\ve(f)} k_t(x) E_{\ve(g)} \in \Vil, \\
 &s \mapsto \ip{u'\ve}{k_s(x) u\ve} \text{ is continuous, and } \\
 &\ip{u'\ve(f)}{k_t(x)u\ve(g)} =
 \ip{u'}{xu}\ip{\ve(f)}{\ve(g)} + \\
 & \qquad \qquad \qquad \qquad \qquad \qquad \qquad
 \int_0^t ds\ \big\la u'\ve(f),
 k_s\big(E^{\wh{f}(s)}\varphi_{\wh{g}(s)}(x)\big)u\ve(g) \big\ra
 \end{align*}
($u'\in\init', f\in\Step_{\Til'}, x\in \Vil, u\in\init,
 g\in\Step_\Til$);
 $(\Exps_{\Til'}, \Exps_{\Til})$-\emph{weakly regular} means
\[
 s \mapsto E^{\ve'}k_s(\cdot)E_\ve \text{ is locally bounded }
 \Rplus \to B(\Vil),
 \quad
 \ve \in \Exps_\Til, \ve' \in \Exps_{\Til'}.
\]

 The unique solution enjoys the following further property:
 \begin{equation}
 \label{further property}
 k^\varphi \in \QSCkhatbHcVET,
 \end{equation}
 and if
 $\varphi(\{1_\Comp\}\times\Til) \subset
 CB(\Vil;\Vil\otM \ket{\khat})$
 then
$ k^\varphi \in \QSCcbHcVET$.
 Moreover, if $\varphi$ has an adjoint map
 $\varphi^\dagger \in
 F\big(\{1_\Comp\}\times \Til';
 \khat\text{-}B(\Vil^\dagger; \Vil^\dagger\otM \ket{\khat})\big)$
 satisfying
 \[
 E^{\chat} \varphi^\dagger_{\dhat} (x^*) =
 \big(E^{\dhat} \varphi_{\chat} (x)\big)^*,
 \quad
  x\in\Vil, c\in\Til, d\in\Til',
 \]
 then $k^\varphi \in \QSCddaggerkhatbHcVETprimeET$
 with adjoint cocycle $k^{\varphi^\dagger}$.
 In this case,
 \[
 \varphi = \phi|_{\{1_\Comp\}\times\Til}
 \]
 for a unique map $\phi$ in
 $L^\ddagger_{\wh{\Dil'}}\big(\wh{\Dil};
 \khat\text{-}B(\Vil;\Vil\otM\ket{\khat})\big)$,
 the space of linear maps
 $\phi : \wh{\Dil} \to \khat\text{-}B(\Vil;\Vil\otM\ket{\khat})$
 having an adjoint linear map
 $\phi^\dagger : \wh{\Dil'} \to
 \khat\text{-}B(\Vil^\dagger;\Vil^\dagger\otM\ket{\khat})$,
 where $\wh{\Dil} = \Comp \op \Dil$, $\Dil = \Lin \Til$
 and similarly for $\Dil'$ and $\Til'$,
 moreover
 \[
 k^\varphi_t = k^\phi_t|_{\Exps_\Til},
 \quad
 t\in\Rplus.
 \]
 The above facts are essentially contained in~\cite{LW existence},
 supplemented by~\cite{LSk qsde};
 the minor modifications needed for the present generality are
 explained in~\cite{jml Holder}.

 The situation is summarised as follows.
 The map
 \[
 F\big(\{1_\Comp\}\times \Til;
 \khat\text{-}B(\Vil; \Vil\otM \ket{\khat})\big) \to
 \QSCkhatbHcVET, \ \varphi \mapsto k^\varphi,
 \]
is injective and restricts to maps
\begin{align*}
 &F\big(\{1_\Comp\}\times \Til;
 CB(\Vil; \Vil\otM \ket{\khat})\big) \to
 \QSCcbHcVET \\
 &L^\ddagger_{\wh{\Dil'}}\big(\{\wh{\Dil};
 \khat\text{-}B(\Vil; \Vil\otM \ket{\khat})\big) \to
 \QSCddaggerkhatbHcVETprimeET \\
 &L^\ddagger_{\wh{\Dil'}}\big(\{\wh{\Dil};
 CB(\Vil; \Vil\otM \ket{\khat})\big) \to
 \QSCddaggercbHcVETprimeET,
\end{align*}
where $\Dil = \Lin \Til$ and $\Dil' = \Lin \Til'$.

 The following theorem extends Theorem 5.6 of~\cite{LSk qsde},
 where $\Vil$ is assumed to be finite dimensional.

 \begin{thm}
 \label{four bijections}
 Each of the above four maps is bijective.
 \end{thm}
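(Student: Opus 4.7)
Injectivity of the four maps, and the fact that they co-restrict as claimed, is established in the discussion preceding the theorem; the task is therefore to prove surjectivity of each. Since the first map dominates the other three---once its inverse is constructed, it will automatically restrict to the inverses of the other three when matching structural hypotheses are imposed on both sides---I would focus on the first and extract the refinements at the end. My strategy is: given a H\"older cocycle $k \in \QSCkhatbHcVET$, extract a candidate coefficient $\varphi$ by differentiation at $t=0$, verify the $\khat$-boundedness of its values using the H\"older $1/2$ hypothesis, and then conclude $k = k^\varphi$ by uniqueness of $(\Exps_{\Til'},\Exps_\Til)$-weakly regular weak solutions to~\eqref{QSDEmapping}.

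First I would fix an arbitrary total subset $\Til' \ni 0$ of $\noise$ and define $\varphi_{\dhat}(x) \in \Vil\otM\ket{\khat}$ for $d\in\Til$ and $x\in\Vil$ by specifying its slices against each $\chat$ with $c\in\Til'$. The inclusion $\QSChbHcVET \subset \QSCMrVET$ provides bounded generators $\mathcal{L}^{c,d}$ on $\Vil$ for all associated semigroups $\mathcal{P}^{c,d}$, and a Weyl-cocycle conjugation trick---using the basic contraction cocycles $W^c$ to pass between $k$ and its associated cocycles---lets one isolate the intrinsic contribution by setting $E^{\chat}\varphi_{\dhat}(x)$ equal to $\mathcal{L}^{c,d}(x)$ modulo explicit Weyl-derived correction terms in $c$, $d$ and $x$. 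Appropriate linear extension across $c$ in the total set $\Til'$ assembles these slices into a single element of $\Vil\otM\ket{\khat}$, courtesy of the matrix-space characterisation recalled before the theorem.

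Next I would verify that $\varphi_{\dhat} \in \khat\text{-}B(\Vil;\Vil\otM\ket{\khat})$, completely bounded in the cb-case. This is where the H\"older $1/2$ hypothesis is decisive. Writing the difference $k_{t,\ve(d_{[0,t[})}(x) - x\ot\ket{\ve(d_{[0,t[})}$, invoking the cocycle identity and the basic Weyl column estimate~\eqref{Weyl Holder}, and dividing by the correct power of $t$, one sees that the H\"older $1/2$ increment is exactly of the magnitude predicted by a $\khat$-bounded coefficient; a uniform-in-$t$ bound on the appropriate difference quotient then supplies the required $\khat$-boundedness (or complete boundedness under the cb-hypothesis) together with the required codomain membership in $\Vil\otM\ket{\khat}$.

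Finally I would show $k = k^\varphi$. By construction, $k$ satisfies the integral form of~\eqref{QSDEmapping} against step-function exponential vectors with coefficient $\varphi$---this is the integrated version of the differentiation carried out in the second step, propagated from infinitesimal to finite time by the cocycle identity---and $k$ is $(\Exps_{\Til'},\Exps_\Til)$-weakly regular because H\"older-continuous columns are locally bounded. Uniqueness of such weak solutions, as recalled before the theorem, then forces $k = k^\varphi$. The refinements for the three remaining maps follow with little extra work: the cb-hypothesis on $k$ yields CB values of $\varphi$ via the same slice analysis; the existence of an adjoint cocycle $k^\dagger$ symmetrically produces an adjoint map $\varphi^\dagger$, and linearity of the extensions to $\wh{\Dil}$ and $\wh{\Dil'}$ is then forced by the adjoint-intertwining identity together with totality of $\Til$ and $\Til'$. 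I expect the main obstacle to be the $\khat$-boundedness extraction in the third paragraph: disentangling the $O(t^{1/2})$ noise contribution from the $O(t)$ Markov contribution in the cocycle increment, in a manner compatible with the matrix-space structure of $\Vil\otM\ket{\khat}$ when $\noise$ is infinite-dimensional, is precisely the technical step that the finite-dimensionality assumption of \cite[Thm.~5.6]{LSk qsde} sidestepped.
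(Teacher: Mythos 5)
Your overall strategy --- differentiate the cocycle at $t=0$, use the H\"older $\tfrac12$ hypothesis to control the resulting difference quotient, and identify $k$ with $k^\varphi$ --- is the strategy of the paper, and your closing diagnosis (separating the $O(t^{1/2})$ noise contribution from the $O(t)$ Markov contribution is exactly what the finite-dimensionality assumption of \cite{LSk qsde} avoided) is accurate. Two divergences are worth recording. First, your second and third paragraphs are in the wrong order: you propose to \emph{define} $E^{\chat}\varphi_{\dhat}(x)$ for each $c\in\Til'$ from the semigroup generators $\mathcal{L}^{c,d}$ and then ``assemble'' these slices into an element of $\Vil\otM\ket{\khat}$ ``courtesy of the matrix-space characterisation''. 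That characterisation only tests whether a \emph{given} operator $T\in B(\init;\init'\ot\khat)$ lies in the matrix space; it does not manufacture $T$ from slices prescribed against the merely total set $\{\chat: c\in\Til'\}$, and without a priori norm control such a family need not cohere into a bounded operator. The paper instead takes as primary object the single column difference quotient
\[
 \varphi_{\chat,t} =
 t^{-1}\begin{bmatrix} E^{\ve(0)}\\ E^{1_{[0,t[}}\end{bmatrix}
 \big(k_{t,\ve(c_{[0,t[})}-\iota^\Vil_{\ve(c_{[0,t[})}\big)(\cdot),
\]
which is $\Vil\otM\ket{\khat}$-valued for each $t>0$, is uniformly bounded precisely because $\|E^{1_{[0,t[}}\|=t^{1/2}$ cancels against the H\"older estimate, and converges in the point-weak-operator topology; the slices and their relation to the $\mathcal{L}^{c,d}$ then come out as consequences rather than inputs. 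Your third paragraph effectively retreats to this difference quotient, so the plan survives, but as written the second paragraph is not a construction. Second, your identification step shows that $k$ itself satisfies the integrated equation and invokes uniqueness of weakly regular weak solutions; the paper argues more economically that the two Markov-regular cocycles $k$ and $k^\varphi$ have the same associated semigroup generators and hence coincide by the semigroup representation --- this needs only the derivative at $t=0$ just computed, whereas propagating the infinitesimal relation to finite times via the cocycle identity amounts to re-deriving part of the existence theorem. Your treatment of the three refinements, in particular recovering linearity of $\varphi$ on $\wh{\Dil}$ from the adjoint hypothesis and totality, matches the paper.
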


 The idea of the proof is to differentiate directly, in other words,
 to show that for $k \in \QSCkhatbHcVET$ and $c\in\Til$, the family
 \[
 \varphi_{\chat,t} :=
 t^{-1} \begin{bmatrix} E^{\ve(0)} \\ E^{1_{[0,t[}} \end{bmatrix}
 \big(k_{t,\ve(c_{[0,t[})} - \iota^\Vil_{\ve(c_{[0,t[})}\big)(\cdot)
 \quad
(t>0)
 \]
  converges
 in $\khat$-$B(\Vil;\Vil\otM\ket{\khat})$
 to a map $\varphi_{\chat}$, as $t\to 0^+$,
 in the point-W.O. topology:
 \[
 \ip{\xi}{\varphi_{\chat ,t}(x)\eta} \to
  \ip{\xi}{\varphi_{\chat}(x)\eta},
 \quad
 x\in\Vil, \xi\in\init', \eta\in\init,
 \]
 and then to show that the resulting map
 $\varphi \in F\big(\{1_\Comp\}\times \Til;
 \khat\text{-}B(\Vil; \Vil\otM \ket{\khat})\big)$ satisfies
 $k = k^\varphi$ by verifying that the two (Markov-regular) cocycles
 $k$ and $k^\varphi$
 have the same associated semigroup generators. The rest then follows
 easily. Here
  \begin{multline}
  \label{E 1 0,t}
 E^{1_{[0,t[}} :=
 I_{\init'\ot\noise}\ot \bra{1_{[0,t[}} =
  I_{\init'} \ot \big( I_\noise \ot \bra{1_{[0,t[}} \big)
 \\
 \in
 B(\init'\ot\Fock;\init'\ot\noise) =
  B(\init') \otol B(\Fock;\noise),
 \end{multline}
 by means of the inclusion
 $\noise \ot L^2(\Rplus) = L^2(\Rplus;\noise) \subset \Fock$, as
 one-particle subspace, and $\iota^\Vil_\ve(x) := x\ot\ket{\ve}$ for $\ve\in\Exps_\Til$,
 $x\in\Vil$.

 For bounded QS cocycles we can say more. Note first the natural
 inclusions
 \[
 \hil\text{-}B(\Vil;\Vil\otM B(\khat)) \subset
 L\big(\khat; \hil\text{-}B(\Vil;\Vil\otM\ket{\khat})\big),
 \]
for any Hilbert space $\hil$, arising from the identification
 $\phi_{\chat} = \phi(\cdot)E_{\chat}$ ($c\in\noise$).

\begin{thm}
 Let $k \in \QSCddaggerkhatbHcVETprimeET$ be bounded,
 with locally uniform bounds.
 Then $k = k^\psi|_{\Exps_\Til}$ for a unique map
 $\psi \in \khat$-$B(\Vil;\Vil\otM B(\khat))$.
 Moreover, if $k \in \QSCddaggercbHcVETprimeET$ then
 $\psi \in CB(\Vil;\Vil\otM B(\khat))$.
\end{thm}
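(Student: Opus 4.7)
The plan is to apply Theorem~\ref{four bijections} to extract a linear coefficient on $\wh{\Dil}$ and then, using the extra operator-level boundedness of $k$, extend it to a single map valued in the full matrix space $\Vil \otM B(\khat)$. Concretely, since $k \in \QSCddaggerkhatbHcVETprimeET$, Theorem~\ref{four bijections} supplies a unique $\phi \in L^\ddagger_{\wh{\Dil'}}\bigl(\wh{\Dil}; \khat\text{-}B(\Vil; \Vil\otM\ket{\khat})\bigr)$ with $k = k^\phi|_{\Exps_\Til}$. Define the sesquilinear form $\beta_x \colon \wh{\Dil'} \times \wh{\Dil} \to \Vil$ by $\beta_x(\dhat, \chat) := E^{\dhat}\phi(\chat)(x)$; these will be the matrix slices of the sought-for $\psi(x) \in \Vil \otM B(\khat)$.

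The heart of the argument is to promote $\beta_x$ to a bounded sesquilinear form on $\khat \times \khat$. The local uniform operator bounds on $k$ let us extend the prescription $\calPcd_t(x) := E^{\vp(c_{[0,t[})} k_t(x) E_{\vp(d_{[0,t[})}$ to all $c, d \in \noise$, producing semigroups on $\Vil$ whose norms are locally uniformly bounded, and the H\"older hypothesis forces them to be norm-continuous with bounded generators $\gamma^{c,d} \in B(\Vil)$. Taking short-time limits as in the proof of Theorem~\ref{four bijections}, using
\[
\varphi_{\chat,t}(x) = t^{-1} \begin{bmatrix} E^{\ve(0)} \\ E^{1_{[0,t[}} \end{bmatrix} \bigl(k_{t,\ve(c_{[0,t[})} - \iota^\Vil_{\ve(c_{[0,t[})}\bigr)(x)
\]
together with~\eqref{E 1 0,t}, the values $\beta_x(\dhat,\chat)$ are identified (up to explicit polynomial normalisation corrections, and index relabeling, arising from the passage between $\vp$ and $\ve$) with $\gamma^{c,d}(x)$; the cocycle-level boundedness then produces an estimate $\|\beta_x(\dhat,\chat)\| \leq C \|x\| \|\dhat\| \|\chat\|$ on $\wh{\Dil'} \times \wh{\Dil}$. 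Since $\Til, \Til'$ are total in $\noise$ and contain $0$, $\wh{\Dil}, \wh{\Dil'}$ are dense in $\khat$ (Proposition~\ref{skeide}), and $\beta_x$ extends uniquely to a bounded sesquilinear form $\khat \times \khat \to \Vil$.

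This extended form defines a bounded operator $\psi(x) \in B(\init\ot\khat; \init'\ot\khat)$ via $E^{\dhat}\psi(x)E_{\chat} = \beta_x(\dhat,\chat)$, whose matrix slices all lie in $\Vil$, so $\psi(x) \in \Vil \otM B(\khat)$. Linearity of $x \mapsto \psi(x)$ is immediate, and the bound delivers $\psi \in \khat\text{-}B(\Vil; \Vil \otM B(\khat))$. Through the inclusion recorded before the theorem, $\psi$ restricts on $\wh{\Dil}$ to $\phi$, so $k^\psi|_{\Exps_\Til} = k^\phi|_{\Exps_\Til} = k$, and uniqueness of $\psi$ follows from the uniqueness clause of Theorem~\ref{four bijections}. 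When $k \in \QSCddaggercbHcVETprimeET$, one runs the identical argument at each matrix level $M_n(\Vil)$, using that the liftings $k^{(n)}$ inherit the cb-H\"older and cb-boundedness properties, and takes the supremum over $n$ to conclude $\psi \in CB(\Vil; \Vil \otM B(\khat))$.

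The principal obstacle is the second step: extracting, from the operator-level bound $\sup_{t \leq T}\|k_t(x)\| < \infty$, a sesquilinear estimate on the short-time-limit coefficient $\beta_x$ that is uniform in $(\dhat,\chat)$ over bounded subsets of $\khat$. The delicate point is to balance the $t^{-1/2}$ scaling arising from $E^{1_{[0,t[}}$ against the H\"older $\tfrac{1}{2}$-continuity assumption, and to verify that the resulting limit is genuinely sesquilinear once the polynomial normalisation corrections have been subtracted. Once this bound is in hand, the density extension and the verification of $k = k^\psi|_{\Exps_\Til}$ are routine.
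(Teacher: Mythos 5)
Your overall architecture matches the paper's: invoke Theorem~\ref{four bijections} to get the linear coefficient $\phi \in L^\ddagger_{\wh{\Dil'}}\big(\wh{\Dil}; \khat\text{-}B(\Vil;\Vil\otM\ket{\khat})\big)$, then use the operator-level boundedness of $k$ to assemble a single map $\psi$ into $\Vil\otM B(\khat)$ whose slices recover $\phi$, and conclude by density and the uniqueness clause. However, there is a genuine gap at exactly the point you flag as ``the principal obstacle'': your mechanism for the uniform bound $\|E^{\dhat}\phi_{\chat}(x)\| \leq C\|x\|\|\dhat\|\|\chat\|$ does not work. You propose to identify $\beta_x(\dhat,\chat)$ with the generators $\gamma^{c,d}$ of the associated semigroups and then appeal to ``cocycle-level boundedness.'' But locally uniform bounds on a family of semigroups give no bound whatsoever on their generators (a contraction semigroup can have a generator of arbitrarily large norm), and while norm-continuity of each $\calPcd$ does make each individual generator bounded, it gives no uniformity of those bounds over $(c,d)$ in bounded subsets of $\noise\times\noise$ --- the H\"older constants in the hypothesis are per-column and are not assumed uniform either. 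Without that uniformity the sesquilinear form $\beta_x$ has no reason to be bounded on $\wh{\Dil'}\times\wh{\Dil}$, and the density extension to $\khat\times\khat$ collapses.

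The paper's device, which your proposal is missing, is to isolate the preservation corner and treat it directly at the level of the cocycle rather than of the generators: the three corners $\tau = E^{\wh{0}}\phi_{\wh{0}}(\cdot)$, $\chi = \Delta\phi_{\wh{0}}(\cdot)$ and $\alpha = \big(\Delta\phi^\dagger_{\wh{0}}(\cdot)\big)^\dagger$ are already bounded because $\phi_{\wh{0}}$ and $\phi^\dagger_{\wh{0}}$ individually are, and the only genuinely new estimate needed is for the $(2,2)$ corner, obtained from the family
\[
\nu_t := t^{-1} E^{1_{[0,t[}} k_t(\cdot) E_{1_{[0,t[}} \qquad (t>0),
\]
which is locally uniformly bounded for free, since $\|E^{1_{[0,t[}}\| = \|E_{1_{[0,t[}}\| = t^{1/2}$ exactly cancels the $t^{-1}$, giving $\|\nu_t(x)\| \leq \sup_{s\leq t}\|k_s(x)\|$. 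One then establishes point--weak-operator convergence of $\nu_t$ to a map $\nu \in B(\Vil;\Vil\otM B(\noise))$ and sets $\psi = \left[\begin{smallmatrix}\tau & \alpha\\ \chi & \nu - \iota\end{smallmatrix}\right]$. Your closing remark about ``balancing the $t^{-1/2}$ scaling against the H\"older $\tfrac12$-continuity'' shows you sensed where the difficulty lies, but the resolution is not a H\"older estimate at all: it is this exact cancellation in $\nu_t$, which converts the hypothesis of locally uniform operator bounds into the required uniform-in-$\noise$ bound on the preservation component. Your treatment of the cb case by running the argument at each matrix level is consistent with this once the corrected mechanism is in place.
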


 The idea of the proof of this is to let
 $\phi \in L^\ddagger_{\wh{\Dil'}}\big(\wh{\Dil};
 \khat$-$B(\Vil;\Vil\otM\ket{\khat})\big)$ be the mapping arising
 from Theorem~\ref{four bijections},
 to set
 \[
 \tau = E^{\wh{0}}\phi_{\wh{0}}(\cdot), \
 \chi = \Delta \phi_{\wh{0}}(\cdot) \text{ and }
 \alpha = \big( \Delta \phi^\dagger_{\wh{0}}(\cdot)\big)^\dagger
 \]
and to establish convergence, as $t\to 0^+$, of the (locally
uniformly bounded) family
\[
 \nu_t := t^{-1} E^{1_{[0,t[}} k_t(\cdot) E_{1_{[0,t[}}
 \quad
 (t>0)
\]
to a map $\nu \in B(\Vil;\Vil\otM B(\noise))$, in the point-W.O.
topology. The resulting map
 $\psi :=
 \left[\begin{smallmatrix}
 \tau & \alpha \\ \chi  & \nu - \iota
 \end{smallmatrix}\right]$,
 where $\iota = \iota^\Vil_\noise$,
 lies in $B(\Vil;\Vil\otM B(\khat))$ and satisfies the identity
 \[
 E^{\chat} \psi(\cdot)E_{\dhat} =
 E^{\chat} \phi_{\dhat}(\cdot),
 \quad
 d\in\Dil, c\in\Dil';
 \]
the rest then follows easily.

 The following two known results ([$\text{LW}\!_{3,2}$])
 may be easily deduced from the above theorem.

 \begin{cor}
 Let $k\in\QSCMrAE$ be completely positive and contractive, on a
 $\Cstar$-algebra $\Al$.
 Then there is $\phi \in CB(\Al; \Al\otM B(\khat))$ such that
 $k=k^\phi$.
 \end{cor}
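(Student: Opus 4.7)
The strategy is to apply the preceding theorem: reduce to verifying that $k\in\QSCddaggercbHcVETprimeET$ with locally uniform bounds. Existence of an adjoint cocycle is immediate, since on a $\Cstar$-algebra every CP map is $*$-preserving, so $k^\dagger_t(y) := k_t(y^*)^*$ defines an adjoint cocycle on $\Al^\dagger = \Al$ which inherits CP contractivity and Markov-regularity from $k$. The uniform bound $\cbnorm{k_t}\leq 1$ (and likewise for $k^\dagger$) follows at once from complete contractivity.

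The main work is cb-H\"older $\tfrac12$-continuity of columns. First, Markov-regularity upgrades to cb-Markov-regularity: each diagonal semigroup $\calPcc_t$ is a compression of the CP map $k_t$ by a common vector state on the Fock factor, hence is itself CP contractive, so $\cbnorm{\calPcc_t} = \norm{\calPcc_t}$ and norm-continuity promotes to cb-norm-continuity. Second, for the vacuum column the Kadison--Schwarz inequality $k_t(x)^*k_t(x) \leq k_t(x^*x)$ gives
\[
 \norm{(k_t(x) - x\ot I_\Fock)u\vp(0)}^2
 \leq
 \bip{u}{\bigl(\mathcal{P}^{0,0}_t(x^*x) - \mathcal{P}^{0,0}_t(x^*)x - x^*\mathcal{P}^{0,0}_t(x) + x^*x\bigr)u};
\]
the bracketed expression vanishes at $t=0$ and, thanks to Markov-regularity of the expectation semigroup, has $t$-derivative bounded uniformly in a neighbourhood of $0$ by a constant multiple of $\norm{x}^2$, so the right-hand side is $O(t)\,\norm{x}^2\norm{u}^2$; running the argument at every matrix level (CP lifts to each $M_n$) yields $\norm{k_{t,\vp(0)} - \iota^\Al_{\vp(0)}}_\cb = O(\sqrt t)$. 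Third, for a general fixed $\ve = \vp(f)\in\Exps_\Til$ and $t$ smaller than the first jump of $f$, the adaptedness of $k$ reduces the column $k_{t,\ve}$ to the $\vp(c_{[0,t[})$-column (with $c = f(0^+)$) tensored with the $t$-independent vector $\vp(f_{[t,\infty[})$; the Weyl identity $E_{\vp(c_{[0,t[})} = W^c_t E_{\vp(0)}$ then yields the decomposition
\[
 k_t(x)E_{\vp(c_{[0,t[})} - \iota^\Al_\Fock(x)E_{\vp(c_{[0,t[})}
 = \bigl[k_t(x) - \iota^\Al_\Fock(x)\bigr] E_{\vp(0)}
 + \bigl[k_t(x) - \iota^\Al_\Fock(x)\bigr](W^c_t - I)E_{\vp(0)},
\]
whose second summand is $O(\sqrt t)$ by \eqref{Weyl Holder} together with $\cbnorm{k_t}\leq 1$, and whose first summand is $O(\sqrt t)$ by the vacuum-column estimate. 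Summing gives cb-H\"older $\tfrac12$-continuity of $t\mapsto k_{t,\ve}$; applying the same argument to $k^\dagger$ establishes $k\in\QSCddaggercbHcVETprimeET$.

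The preceding theorem then yields a unique $\psi\in CB(\Al;\Al\otM B(\khat))$ with $k = k^\psi|_{\Exps_\Til}$; taking $\Til=\noise$ gives $k = k^\psi$ on all of $\Exps$, and $\phi:=\psi$ proves the corollary. The chief obstacle is the vacuum-column estimate: Kadison--Schwarz reduces it to a calculation involving the bracketed expression in $\mathcal{P}^{0,0}_t$, and one must verify that this specific algebraic combination is indeed $O(t)$, not merely $o(1)$, which is the concrete content of Markov-regularity delivered via boundedness of the generator.
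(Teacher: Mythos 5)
Your overall strategy is exactly the paper's: the paper disposes of this corollary in one sentence, saying it follows from the preceding theorem once ``judicious use of the operator Schwarz inequality'' shows that $k\in\QSCddaggercbHcAEE$, and your Kadison--Schwarz computation for the vacuum column, the Weyl-operator reduction for general exponential columns, and the passage through the adjoint cocycle are all the right ingredients for that verification.

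There is, however, one step where your justification does not hold up, and it is precisely the crux of obtaining a \emph{completely} bounded generator. You claim that Markov-regularity upgrades to cb-Markov-regularity because $\cbnorm{\calPcc_t}=\norm{\calPcc_t}$ for CP maps; but the quantity you actually need to control is $\cbnorm{\calPcc_t-\id}$, and $\calPcc_t-\id$ is \emph{not} completely positive, so the equality of norm and cb-norm for CP maps gives you nothing here. The same uniformity problem reappears when you ``run the argument at every matrix level'': your bound on the bracketed expression
\[
\mathcal{P}^{0,0}_t(x^*x) - \mathcal{P}^{0,0}_t(x^*)x - x^*\mathcal{P}^{0,0}_t(x) + x^*x
\]
is $O(t)\norm{x}^2$ with a constant controlled by $\norm{\mathcal{L}}$, the norm of the generator of the expectation semigroup; at matrix level $n$ the corresponding constant involves $\norm{\mathcal{L}^{(n)}}$, and you need these to be bounded uniformly in $n$, i.e.\ $\cbnorm{\mathcal{L}}<\infty$. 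That the generator of a norm-continuous semigroup of completely positive contractions on a $\Cstar$-algebra is automatically completely bounded is true, but it is not a formal consequence of $\cbnorm{\cdot}=\norm{\cdot}$ for CP maps: it rests on the structure of such generators (the Christensen--Evans theorem, which the paper cites in exactly this connection), or on a genuinely uniform-in-$n$ Schwarz-inequality argument which you have not supplied. As written, your proof establishes $k\in\QSCddaggerkhatbHcVETprimeET$ (hence a $\khat$-bounded $\psi$) but not membership in the cb-H\"older class needed for the conclusion $\phi\in CB(\Al;\Al\otM B(\khat))$. The remaining steps --- the adjoint cocycle via $*$-preservation, the splitting of $E_{\vp(c_{[0,t[})}$ through $W^c_t$, and the appeal to the preceding theorem --- are fine.
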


This is proved by judicious use of the operator Schwarz inequality
which reveals that, under the given hypotheses, $k\in\QSCddaggercbHcAEE$.

The precise form that $\phi$ must take for $k^\phi$ to be completely
positive and contractive is given in~\cite{LW jfa}; this is a
stochastic extension of the Christensen-Evans Theorem (\cite{ChE}).
A new proof of the form of $\phi$,  using global Schur-action semigroups on matrix spaces, is
given in~\cite{LW cb2}.

\begin{cor}
 Let $V\in\QScCMrhk$.
 Then there is a unique operator
 $F\in B(\init\ot\khat)$ such that $V = V^F$, that is $V$ is the
 unique \tu{(}weak, contractive\tu{)} solution of the QS differential equation
 $dV_t = V_t d\Lambda_F(t)$, $V_0 = I_{\init\ot\Fock}$.
\end{cor}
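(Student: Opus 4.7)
The plan is to reduce the corollary to the preceding bounded-cocycle theorem via the correspondence~\eqref{correspondences}, applied with $\Vil = \ket{\init}$ and $\Til = \Til' = \noise$ in the Section~\ref{section: Holder} setup. Given $V \in \QScCMrhk$, I set $k^V_t(\ket{u}) := V_t(\ket{u} \ot I_\Fock)$, which defines a mapping cocycle on $\ket{\init}$, with adjoint mapping cocycle $k^{V,\dagger}_t(\bra{u}) := (\bra{u} \ot I_\Fock)V_t^*$; the adjoint identity $E^\ve k^{V,\dagger}(\bra u)E_{\ve'} = (E^{\ve'}k^V(\ket u)E_\ve)^*$ is a one-line computation, and the cocycle and adaptedness conditions for $k^{V,\dagger}$ follow from those of $V$ together with adaptedness of $V_t$. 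The first step is to verify that $k^V \in \QSCddaggercbHcVETprimeET$; cb-boundedness of the columns is immediate from contractivity of $V$ and $V^*$, together with the fact that bounded maps out of $\ket{\init}$ are automatically completely bounded.

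The key technical point is cb-Hölder $\tfrac{1}{2}$-continuity of $t \mapsto k^V_{t,\ve}$ at $t=0$. Using the Fock tensor decomposition together with adaptedness, for $\ve = \vp(g)$ with $g$ a step function equal to $c \in \noise$ on an initial interval $[0,T[$ and $t \in [0,T[$, one has
\[
 \norm{(V_t - I)(u \ot \vp(g))}^2 \leq 2\norm{u}^2 - 2\re\ip{u}{P^{c,c}_t u},
\]
since $\norm{V_t(u \ot \vp(c_{[0,t[}))}^2 \leq \norm{u}^2$ by contractivity, and $\ip{u \ot \vp(c_{[0,t[})}{V_t(u \ot \vp(c_{[0,t[}))} = \ip{u}{P^{c,c}_t u}$ by definition of the associated semigroup. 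Markov-regularity of $V$ is inherited by the associated cocycle $V^{c,c}$, so $P^{c,c}$ has bounded generator and the right-hand side is $O(t)$, yielding the $\tfrac{1}{2}$-Hölder bound. General step functions $g$ are handled by iteration, and the symmetric argument for $k^{V,\dagger}$ uses that $\wt V \in \QScCMrhk$.

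With $k^V \in \QSCddaggercbHcVETprimeET$ established, the second theorem of Section~\ref{section: Holder} supplies a unique $\psi \in CB(\ket{\init};\ket{\init} \otM B(\khat))$ with $k^V = k^\psi|_{\Exps}$. Under the natural identifications $\ket{\init} \otM B(\khat) = B(\khat;\init \ot \khat)$ and $B(\ket{\init}; B(\khat;\init \ot \khat)) \cong B(\init \ot \khat)$, the map $\psi$ corresponds to a unique operator $F \in B(\init \ot \khat)$ via $\psi(\ket{u})(c) = F(u \ot c)$. The existence result of~\cite{LW existence} produces $V^F$ as the unique weak contractive solution of $dV_t = V_t\,d\Lambda_F(t)$; its induced mapping cocycle satisfies $k^{V^F} = k^\psi|_{\Exps} = k^V$, so injectivity of~\eqref{correspondences} forces $V = V^F$, and uniqueness of $F$ follows from that of $\psi$. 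The principal obstacle in this programme is the Hölder-$\tfrac{1}{2}$ verification: Markov-regularity alone only delivers norm-continuity of the associated semigroups, so one must convert this into half-order continuity of the cocycle columns via the squaring identity above, using contractivity crucially.
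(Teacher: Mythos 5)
Your proposal is correct and follows exactly the route the paper indicates: pass to the mapping cocycle on $\ket{\init}$ via~\eqref{correspondences}, check membership of $\QSCddaggercbHcVETprimeET$ (the squaring identity $\norm{(V_t-I)E_{\vp(g)}}^2\leq 2\re\ip{u}{(I-P^{c,c}_t)u}$ plus norm-continuity of the associated semigroups being precisely the implicit step), and then invoke the bounded-cocycle theorem to obtain $\psi\in CB(\ket{\init};\ket{\init}\otM B(\khat))\cong B(\init\ot\khat)$. The paper gives no further detail for this corollary, so your filling-in of the H\"older-$\tfrac12$ verification is exactly what it leaves to the reader.
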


This is deduced from Theorem~\ref{four bijections} via the
correspondences~\eqref{correspondences}.

The quantum It\^o formula provides the following necessary and
sufficient condition on an operator $F\in B(\init\ot\khat)$
for $V^F$ to be contractive:
\begin{equation}
 \label{q(F)}
q(F) \leq 0 \text{ where } q(F) := F^* + F + F^*\Delta F,
\end{equation}
equivalently, $r(F) \leq 0$ where $r(F) := F + F^* + F\Delta F^*$
 (\cite{LW ptrf}, Theorem 7.5).
 Thus $F \mapsto V^F$ defines a bijection
 \begin{equation}
 \label{Mr bijection}
 C_0(\init,\noise):= \{F \in B(\init\ot\khat): q(F) \leq 0 \} \to
 \QScCMrhk,
 \end{equation}
extending the bijection
 \[
 \{F \in B(\init\ot\khat): q(F) = 0 = r(F) \} \to
 \QSuCMrhk
 \]
 given in Theorem~\ref{HuL theorem}
 (in case $\noise = \Comp$), 
 where
 $ \QSuCMrhk$ denotes the collection of
 Markov-regular \emph{unitary} cocycles.

In the following section we describe a significant extension of this
result, to a class of
 \emph{strongly continuous} operator cocycles, which
necessarily goes beyond the realm of QS differential equations.

\section{Holomorphic cocycles}
 \label{section: holomorphic}

 \emph{For this section we again fix Hilbert spaces} $\init$ \emph{and} $\noise$.
 We introduce the class of holomorphic QS contraction cocycles on
 $\init$ with noise dimension space $\noise$, and consider their
 infinitesimal characterisation. We then describe the connection to
 minimal quantum dynamical semigroups. The main results of this section are
 Theorems~\ref{holomorphic bijection} and~\ref{underpins}.
 Further details and full proofs
 will appear in the forthcoming paper~\cite{LSi holomorphic}.

 Let $\Xh$ denote the class of operators $K$ on $\init$ which are
 densely defined and dissipative:
 $\re\ip{u}{Ku} \leq 0$ ($u\in\Dom K$)
 with no dissipative extension, in other words \emph{maximal dissipative}.
 It is well-known that such operators are precisely the generators
 of contractive $C_0$-semigroups on $\init$
 (\cite{Dav}, Theorem 6.4).
 We are interested in the collection $\Xholh$ of quadratic forms $q$
 on $\init$ which are \emph{accretive} and \emph{semisectorial}:
 \[
 \re q[u] \geq 0 \text{ and } \big|\im q[u]\big| \leq C \|u\|_+^2,
 \quad
  u\in \calQ,
 \]
for some $C\geq 0$, where
\[
 \calQ := \Dom q \text{ and }
 \|u\|_+^2 := \big(\re q[u] + \|u\|^2\big)^{1/2},
\]
as well as being \emph{densely defined} and \emph{closed}:
\[
 \calQ \text{ is dense in } \init \text{ and }
 \calQ \text{ is complete w.r.t. the norm } \| \cdot \|_+.
\]
 The term ``semisectorial'' is nonstandard, but avoids the potentially confusing term
 ``continuous'' favoured by some.
``Sectorial'', which is standard, is the strengthening
 $|\im q[u]| \leq C \re q[u]$  ($ u\in \calQ$) for some $C\geq 0$.

For $q \in \Xholh$ there is a unique operator $K$ on $\init$
satisfying
\begin{align*}
 &\Dom K = \Big\{ u \in \calQ: \text{
 the (conjugate-linear) functional }
 v\in\calQ \mapsto q(v,u) \text{ is bounded} \Big\}, \\
 &\ip{u}{Ku} = - q[u], \quad u\in\calQ,
\end{align*}
where $\calQ = \Dom q$ and $q(\ ,\ )$ is the sesquilinear form
associated with $q[\cdot]$ via polarisation, moreover $K \in \Xh$.
 The operators $K\in\Xh$ that arise from forms $q\in\Xholh$ are
 precisely those generators which are semisectorial:
  $|\im \ip{u}{Ku}| \leq C
 \big(-\re \ip{u}{Ku} + \|u\|^2\big)^{1/2}$
 ($u\in \Dom K$) for some $C\geq 0$.
 In this way we view $\Xholh$ as a subset of $\Xh$.

By a \emph{holomorphic contraction semigroup} on $\init$ we mean a
contractive $C_0$-semigroup $P = (P_t)_{t\geq 0}$ for which there is an angle
$\theta\in ]0,\pi/2]$ such that $(e^{-t}P_t)_{t\geq 0}$ extends to a
contraction-valued holomorphic function $\Sigma_\theta \to
B(\init)$, where $\Sigma_\theta$ denotes the open sector of the complex
plane $\{ z \in \Comp\setminus\{0\}: |\arg z| < \theta \}$.

 \emph{Warning}.
Definitions in the literature vary.
 With our definition, the holomorphic contraction semigroups are
 precisely those whose generators are in $\Xholh$
 (see e.g.~\cite{Ouh}).

 \begin{defn}
 A cocycle $V\in\QScChk$ is \emph{holomorphic} if its
 expectation semigroup is holomorphic. Write $\QScCholhk$ for
 the resulting class of cocycles.
 \end{defn}
 For $V \in \QScCholhk$ we write $\gV$ for the form-generator
 of its expectation semigroup and $\calQ^V$ for $\Dom \gamma^V$.
 The dual of a holomorphic QS contraction cocycle $V$ is holomorphic
 with
 \[
 \gamma^{\wt{V}} = (\gV)^*:= u \mapsto \ol{\gV[u]}
 \text{ and } \calQ^{\wt{V}} = \calQ^V.
 \]
Since norm-continuous semigroups are holomorphic, we have the
inclusion
\begin{equation}
 \label{hol contains Mr}
\QScCholhk \supset \QScCMrhk.
\end{equation}

 Kato--type relative boundedness arguments for quadratic forms (\cite{Kato})
yield our first two consequences of the holomorphic assumption.

\begin{propn}
 \label{3.1*}
Let $V\in\QScCholhk$. Then the following hold\tu{:}
\begin{alist}
\item
 All of the associated semigroups of $V$ are holomorphic and their
 form-generators have equal domain $\calQ^V$.
\item
$V$ is \emph{nonsingular}, that is, with
 $E_{1_{[0,t[}} :=
 \big( E^{1_{[0,t[}} \big)^* \in B(\init\ot\noise;\init\ot\Fock)$,
\[
t^{-1} E^{1_{[0,t[}} V_t E_{1_{[0,t[}} \to C^V
 \text{ in the weak operator topology as } t\to 0^+,
\]
for a contraction operator $C^V$ on $\init\ot\noise$.
\end{alist}
\end{propn}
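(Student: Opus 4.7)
My plan is to handle (a) by a Kato-type form-perturbation argument comparing $\gamma^{c,d}$ with $\gV$ via the associated cocycles of $V$, and then to extract (b) by combining an elementary Hilbert-space contraction estimate with the form-theoretic output of (a).

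For (a), the starting identity $\Pcd_t = \bbE\circ V^{c,d}_t$ for the $(c,d)$-associated cocycle $V^{c,d} := ((W^c_t)^* V_t W^d_t)_{t\geq 0}$ yields
\[
\la u, \Pcd_t v\ra = e^{-t(\|c\|^2+\|d\|^2)/2}\,\la u\,\ve(c_{[0,t[}),\, V_t\, v\,\ve(d_{[0,t[}) \ra,
\]
allowing $\gamma^{c,d}$ to be compared with $\gV$ by first-order Fock-space expansion of each exponential vector, $\ve(c_{[0,t[}) = \ve(0) + c_{[0,t[} + O(t)$. The comparison decomposes the perturbation $\gamma^{c,d} - \gV$ into (i) a bounded scalar shift from the normalising factor, (ii) ``cross'' terms linear in $c$ or $d$ coming from a one-particle contribution of one exponential vector paired against the vacuum component of the other, and (iii) an off-diagonal quadratic term involving the one-particle-to-one-particle slice of $V_t$. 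Each of (ii) and (iii) is estimated via Cauchy-Schwarz together with contractivity of $V_t$ to be $\gV$-form-bounded with relative bound $0$ as $t\to 0^+$. Kato's stability theorem for semisectorial forms (\cite{Kato}, Ch.~VI) then ensures $\gamma^{c,d}$ is closed and semisectorial with the common form domain $\calQ^V$, so $\Pcd$ is holomorphic.

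For (b), let $\nu_t := t^{-1} E^{1_{[0,t[}} V_t E_{1_{[0,t[}}$, viewed as an operator on $\init \ot \noise$. The identity $\|E_{1_{[0,t[}}(u \ot c)\|^2 = t\|u\|^2\|c\|^2$ makes $t^{-1/2} E_{1_{[0,t[}}$ an isometry on simple tensors, and combined with $\|V_t\| \leq 1$ this gives the uniform contraction estimate $\|\nu_t\| \leq 1$. For weak convergence it suffices to check convergence of matrix elements
\[
 \la u\ot c, \nu_t(v\ot d)\ra = t^{-1}\la u \ot c_{[0,t[},\, V_t(v\ot d_{[0,t[})\ra
\]
on a dense subspace such as $\calQ^V \otul \noise$. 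This matrix element is precisely the coefficient of type (iii) from (a), now isolated by polarising in $(c,d)$ within the semigroup expansion, and its convergence follows from the holomorphicity of $\Pcd$ established in (a). The limit is bounded bilinearly on $\calQ^V \otul \noise$ by the uniform contraction bound, and so extends to a contraction $C^V \in B(\init\ot\noise)$ as the weak limit of $\nu_t$.

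The main obstacle is the circular appearance of the one-particle slice in (a)(iii) and in (b): the form-bound needed for Kato's theorem in (a) is naturally phrased in terms of the very object $\nu_t$ of (b). I would resolve the circularity by first establishing the purely Hilbert-space contraction bound $\|\nu_t\| \leq 1$, which uses only $\|V_t\| \leq 1$ and no form-theoretic input, then using this uniform bound to close the perturbation estimate in (a), and finally recovering the weak-convergence half of (b) from the resulting holomorphic semigroup structure of the $\Pcd$.
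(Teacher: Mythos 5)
The paper itself only indicates its proof in one line (``Kato-type relative boundedness arguments for quadratic forms''), deferring details to \cite{LSi holomorphic}, so the comparison is necessarily against that announced strategy. Your plan for (a) is in the same spirit, but as written it is circular in a way that your final paragraph does not fully repair: you estimate the ``cross'' and ``quadratic'' perturbation terms by Cauchy--Schwarz and contractivity \emph{after} writing $\gamma^{c,d}-\gV$ in terms of limiting objects (effectively $\LV$, $\LtildeV$, $\CV$) whose existence is precisely what is not yet known. The Kato/KLMN argument has to be run at the level of the difference-quotient forms $q^{c,d}_t[u]:=t^{-1}\la u,(I-\Pcd_t)u\ra$: contractivity of $V_t$ gives, for example, $\|E^{1_{[0,t[}}V_tE_{\ve(0)}v\|^2\le t\,(\|v\|^2-\|P^{0,0}_tv\|^2)\le 2t\re q^{0,0}_t[v]$, whence a relative bound $|q^{c,d}_t[u]-q^{0,0}_t[u]|\le \epsilon\,\re q^{0,0}_t[u]+C_\epsilon\|u\|^2$ uniformly in small $t$, and only then does holomorphy of the expectation semigroup propagate to the $\Pcd$ with common form domain $\calQ^V$. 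Your resolution (``establish $\|\nu_t\|\le 1$ first'') fixes the quadratic term but not the cross terms, and in any case the point is to avoid invoking the limits at all.

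The genuine gap is in (b). The bound $\|\nu_t\|\le 1$ is correct (since $t^{-1/2}E_{1_{[0,t[}}$ is isometric) and yields WOT-compactness, hence subsequential limits; but convergence requires identifying the limit, and your claim that the matrix elements $t^{-1}\la u\ot c_{[0,t[},V_t(v\ot d_{[0,t[})\ra$ ``follow from the holomorphicity of $\Pcd$ by polarising in $(c,d)$'' does not go through as stated. The first-order expansion of $\la u\ve(c_{[0,t[}),V_tv\ve(d_{[0,t[})\ra$ contains, besides the $(1,1)$ component you want, the $(1,0)$ and $(0,1)$ components $t^{-1}\la u\ot c_{[0,t[},V_tE_{\ve(0)}v\ra$ and its adjoint counterpart --- i.e.\ the putative $\LV$ and $\LtildeV$ terms --- together with higher-particle contributions. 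Isolating the piece bilinear in $(\bar c,d)$ by polarisation requires interchanging the extraction of a Taylor coefficient in $(\bar\lambda,\mu)$ with the limit $t\to0^+$, which needs uniform-in-$(\lambda,\mu,t)$ bounds you do not supply; and the separate convergence of the cross components is a genuinely harder fact which the paper obtains not from semigroup theory but by abstract It\^o integration in Fock space (it is the content of the \emph{next} proposition). So (b) cannot simply be read off from (a): either you must prove convergence of the $(1,0)$ and $(0,1)$ quotients independently, or you must supply the Vitali-type uniform analyticity argument that justifies extracting the $\bar\lambda\mu$ coefficient before passing to the limit.
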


 Note that, 
 in view of the identity $R_t E_{1_{[0,t[}} = E_{1_{[0,t[}}$,
 it follows that, for
 $V \in \QScCholhk$,
 \[
 C^{\wt{V}} = (\CV)^*.
 \]

 We come now to the central definition of this section. 

\begin{notn}
Let
 $\Xfourholhk$ denote the collection of quadruples
$\mathbb{F} = (\gamma, L, \Ltilde, C-I)$, where $\gamma \in \Xholh$,
$L$ and $\Ltilde$ are operators from $\init$ to $\init \ot \noise$
with domain $\calQ: = \Dom \gamma$ and $C$ is a contraction in
$B(\init\ot\noise)$, satisfying
\begin{equation}
 \label{structure inequality}
 \| \Delta F \zeta \|^2 \leq 2 \re \Gamma[\zeta]
\end{equation}
 for the operator $\Delta F$ and
 quadratic form $\Gamma$ on $\init \op (\init\ot\noise) = \init \ot\khat$
 given by
 \begin{align*}
 &\Dom \Gamma =\ \Dom \Delta F := \calQ \op (\init\ot\noise),\
 \Delta F := \begin{bmatrix} 0 & 0 \\ L & C-I \end{bmatrix}, \text{ and }  \\
 &\Gamma[\zeta] :=
 \gamma[u] -
 \big( \ip{\xi}{Lu} + \ip{\Ltilde u}{\xi} +
 \ip{\xi}{(C-I)\xi} \big)
 \ \text{ for }
 \zeta = \binom{u}{\xi} \in \Dom \Gamma.
 \end{align*}
\end{notn}
\begin{remarks}
\label{cited Remark}
 (i)
 The relation
 \eqref{structure inequality} contains the inequalities
\begin{equation}
 \label{minimal inequality}
 \| Lu \|^2 \leq 2 \re \gamma[u], \
 \| \Ltilde u \|^2 \leq 2 \re \gamma[u],
 \quad u \in \calQ,
\end{equation}

 (ii)
 If $\gamma$ is bounded, so that $\calQ = \init$, then $L$ and
 $\Ltilde$ are bounded, $ \Gamma[\zeta] = - \ip{\zeta}{F\zeta}$
 ($\zeta \in \init\ot\khat$)
 where
  $F :=
 \left[\begin{smallmatrix} K & M \\ L & C-I
 \end{smallmatrix}\right]$
 in which $M = \Ltilde^*$  and $K$ is the bounded operator
 associated to $\gamma$: $\ip{u}{Ku} = -q[u]$ ($u\in\calQ$),
  and the operator $\Delta F$
 (derived from $\Gamma$)
 indeed equals the quantum It\^o projection
 $\Delta$ composed with $F$; the constraint~\eqref{structure inequality} on $F$ is then
 equivalent to $F\in C_0(\init,\noise)$, the class defined in~\eqref{Mr bijection}.
 In this sense we have the inclusion
 \[
 \Xfourholhk \supset C_0(\init,\noise).
 \]
 We shall see that this matches up with the inclusion~\eqref{hol contains Mr}.

(iii)
 Clearly the form $\gamma$ may be recovered from $\Gamma$,
 but so may each of the operators $L$, $\Ltilde$ and $C$ --- by polarisation.
 Thus $\Gamma$ determines the quadruple $\mathbb{F}$,
 in particular it determines the operator $\Delta F$.

 (iv)
 In general, we are \emph{not} here viewing the operator $\Delta F$
 as a composition of $\Delta$ with some operator $F$.
\end{remarks}

\begin{propn}
 Let $V\in\QScCholhk$.
 Then the following hold.
 \begin{alist}
 \item
 There are operators $L^V$ and $\Ltilde^V$ from $\init \to
 \init\ot\noise$ with domain $\calQ^V$ such that
 \begin{align*}
 &t^{-1} E^{1_{[0,t[}} V_t E_{\ve(0)} u \to L^Vu
 \ \tu{(}\text{weakly}\tu{)}, \text{ and } \\
 &t^{-1} E^{1_{[0,t[}} V_t^* E_{\ve(0)} u \to \Ltilde^Vu
  \ \tu{(}\text{weakly}\tu{)}, \text{ as }
  t \to 0^+,
 \quad
 u \in \calQ^V.
 \end{align*}

 \item
 The quadruple $\mathbb{F}^V := (\gV, \LV, \LtildeV, \CV - I)$
 belongs to $\Xfourholhk$.
 \item
 $L^{\Vdual} = \Ltilde^V$ so $\Ltilde^{\Vdual} = \LV$ and thus
 \[
 \mathbb{F}^{\Vdual} = \big((\gV)^*, \LtildeV, \LV, (\CV)^* -I\big).
 \]
 \end{alist}
\end{propn}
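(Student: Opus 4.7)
The plan is to build everything on Proposition~\ref{3.1*}, which provides both a common form-domain $\calQ^V$ for all associated semigroups of $V$ and the limit contraction $C^V$. For \textbf{part (a)}, I fix $u \in \calQ^V$ and study the family $\bigl( t^{-1} E^{1_{[0,t[}} V_t E_{\ve(0)} u \bigr)_{t > 0}$ in $\init \ot \noise$. First, a uniform norm bound: by adaptedness and Cauchy--Schwarz applied to the one-particle component of $V_t E_{\ve(0)} u \in \init \ot \Fock_{[0,t[}$,
\[
\bigl\| t^{-1} E^{1_{[0,t[}} V_t E_{\ve(0)} u \bigr\|^2 \leq t^{-1}\bigl(\|u\|^2 - \|P^{0,0}_t u\|^2\bigr),
\]
and the right-hand side tends to $2\re\gamma^V[u]$ since $P^{0,0}$ is holomorphic with form-generator $\gamma^V$ on $\calQ^V$. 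Second, convergence of matrix elements against the dense subset $\calQ^V \otul \noise$: for $u' \in \calQ^V$ and $c \in \noise$, use $c_{[0,t[} = e^{t\|c\|^2/2}\vp(c_{[0,t[}) - 1 - R_t$ with $R_t \in \bigoplus_{n \geq 2} \Fock_n$ and $\|R_t\|^2 = e^{t\|c\|^2} - 1 - t\|c\|^2 = O(t^2)$. The residual $t^{-1}\ip{u' \ot R_t}{V_t u\ve(0)}$ is $O(t^{1/2})$, because the $n\geq 2$ particle-component norm of $V_t u \ve(0)$ is itself $O(t^{1/2})$ by the same Cauchy--Schwarz estimate; what remains is $t^{-1}\bigl[e^{t\|c\|^2/2}\ip{u'}{P^{c,0}_t u} - \ip{u'}{P^{0,0}_t u}\bigr]$, which converges by Proposition~\ref{3.1*}(a). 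The operator $\wt{L}^V$ is produced identically from $\bigl(t^{-1} E^{1_{[0,t[}} V_t^* E_{\ve(0)}\bigr)_{t > 0}$; via $\wt{V}_t = R_t V_t^* R_t$ and the invariances $R_t E_{\ve(0)} = E_{\ve(0)}$, $R_t E_{1_{[0,t[}} = E_{1_{[0,t[}}$, this family coincides with $\bigl(t^{-1} E^{1_{[0,t[}} \wt{V}_t E_{\ve(0)}\bigr)_{t > 0}$.

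\textbf{Part (c)} follows at once from the last observation: $L^{\wt V} = \wt{L}^V$ and, by symmetry, $\wt{L}^{\wt V} = L^V$; combined with the previously recorded $\gamma^{\wt V} = (\gamma^V)^*$ and $C^{\wt V} = (C^V)^*$, this gives the stated description of $\mathbb{F}^{\wt V}$.

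For \textbf{part (b)}, the structure inequality~\eqref{structure inequality}, the plan is to apply contractivity of $V_t$ to test vectors
\[
\eta_t := E_{\ve(0)} u + E_{1_{[0,t[}} \xi, \quad \zeta = \binom{u}{\xi} \in \calQ^V \op (\init \ot \noise),
\]
for which $\|\eta_t\|^2 = \|u\|^2 + t\|\xi\|^2$. Expanding $\|V_t\eta_t\|^2 \leq \|\eta_t\|^2$, dividing by $t$, and projecting onto the vacuum and one-particle subspaces of $\Fock_{[0,t[}$, I extract: a diagonal $u$-contribution with limit $-2\re\gamma^V[u] + \|L^V u\|^2$; a diagonal $\xi$-contribution with limit $\|C^V \xi\|^2 - \|\xi\|^2$ (using Proposition~\ref{3.1*}(b) for the $C^V$-piece); and a cross-term contributing $2\re\ip{L^V u}{C^V \xi} + 2\re\ip{\wt{L}^V u}{\xi}$ (the $\wt{L}^V$-piece appearing through the $V^*$-side identity used to define $\wt{L}^V$ in part (a)). Assembling and rearranging yields~\eqref{structure inequality}.

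The main obstacle lies in part (b). The Cauchy--Schwarz bounds on the one-particle block produce $\geq$-inequalities, and care is needed to show that these are sharp in the limit and that higher-particle contributions are $o(t)$. A safety net, should the direct limit analysis prove too delicate, is to establish~\eqref{structure inequality} first on the coordinate axes $\zeta = \binom{u}{0}$ and $\zeta = \binom{0}{\xi}$ --- which yields Remarks~\ref{cited Remark}(i) together with contractivity of $C^V$ --- and then complete the full inequality via polarisation and the sesquilinear structure of $\Gamma$.
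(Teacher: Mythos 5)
The paper gives no detailed argument for this proposition --- it records only that it ``is proved using abstract It\^o integration in Fock space'', deferring the proof to \cite{LSi holomorphic} --- so your semigroup-decomposition route is necessarily different from the intended one. For parts (a) and (c) it is essentially sound: the bound $t^{-1}(\|u\|^2-\|P^{0,0}_tu\|^2)$, the decomposition of $c\ot 1_{[0,t[}$ into exponential vectors plus an $O(t)$ remainder in $\bigoplus_{n\ge2}\Fock_n$, and the reduction of the $V_t^*$-family to $\wt{V}$ via $R_tE_{\ve(0)}=E_{\ve(0)}$ and $R_tE_{1_{[0,t[}}=E_{1_{[0,t[}}$ all work, provided you isolate and justify the form-level differentiation $t^{-1}\ip{u'}{(I-P^{c,0}_t)u}\to\gamma_{c,0}(u',u)$ for $u,u'$ in the common form domain; that is exactly where holomorphy and Proposition~\ref{3.1*}(a) are consumed and it deserves to be stated as the lemma you are invoking.

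Part (b) contains a genuine gap. The diagonal terms are fine in the direction you need them (weak lower semicontinuity of the norm gives $\liminf t^{-1}\|V_tE_{\ve(0)}u\|_{1\text{-particle}}^2\ge\|L^Vu\|^2$ and $\liminf t^{-1}\|V_tE_{1_{[0,t[}}\xi\|^2\ge\|C^V\xi\|^2$, and these enter $\|\eta_t\|^2-\|V_t\eta_t\|^2\ge 0$ with the favourable sign), but the cross term $t^{-1}\ip{V_tE_{\ve(0)}u}{V_tE_{1_{[0,t[}}\xi}$ has no sign and must actually \emph{converge} to $\ip{L^Vu}{C^V\xi}+\ip{\Ltilde^Vu}{\xi}$; it is a pairing of two families of which you control only the weak limits of their $E^{1_{[0,t[}}$-averages together with $O(t^{1/2})$ norm bounds, and a product of weakly convergent families need not converge. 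Your safety net does not close this: an inequality between quadratic forms cannot be recovered from its restriction to complementary subspaces by polarisation --- positivity of $2\re\Gamma-\|\Delta F\,\cdot\|^2$ on $\calQ^V\op\{0\}$ and on $\{0\}\op(\init\ot\noise)$ yields only \eqref{minimal inequality} together with contractivity of $C^V$, which is strictly weaker than \eqref{structure inequality}, the off-diagonal coupling being precisely what is at stake. To handle the cross term you need pointwise (in $s$) control of the one-particle density of $V_tE_{\ve(0)}u$, not merely its average against $1_{[0,t[}$; this is what the abstract It\^o integral representation cited by the paper supplies, through the It\^o isometry.
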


 This is proved using abstract It\^o integration in Fock space.
 Relative boundedness arguments with Yosida-type approximation,
 at the quadratic form level, now combine with the Markov-regular theory
 and the semigroup characterisation of QS cocycles to yield the central result.

 \begin{thm}
 \label{holomorphic bijection}
 The map
 $V \mapsto \mathbb{F}^V$
is a bijection from $\QScCholhk$ to $\Xfourholhk$.
 \end{thm}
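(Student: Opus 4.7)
The plan is to prove injectivity and surjectivity separately, leveraging the Markov-regular bijection~\eqref{Mr bijection} as a template and Yosida-type approximation to bridge bounded and unbounded form-generators. Well-definedness of $V \mapsto \mathbb{F}^V$ has already been secured by the preceding proposition. For injectivity I would show that $\mathbb{F}^V$ determines every associated semigroup $\Pcd$ of $V$; since the family $\{\Pcd : c,d \in \noise\}$ determines $V$ through the semigroup representation (and Proposition~\ref{skeide}), this suffices. By Proposition~\ref{3.1*}(a), each $\Pcd$ is holomorphic with form-generator $\gamma^{c,d}$ defined on the common domain $\calQ^V$, so I only need to recover each $\gamma^{c,d}$ from the quadruple. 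In the Markov-regular case, the associated cocycle construction $V^{c,d} = (W^c)^* V W^d$ together with~\eqref{Mr bijection} supplies an explicit formula for $\gamma^{c,d}$ as a sesquilinear combination of matrix entries of $F$ evaluated against $\chat$ and $\dhat$. Differentiating the same identity directly at the form level should produce the analogous combination built from $\gamma$, $L$, $\Ltilde$ and $C-I$, namely from $\Gamma$ tested against $\binom{u}{\xi}$ with $\xi$ determined by $\chat, \dhat$. These formulae manifestly depend only on $\mathbb{F}^V$, giving injectivity.

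For surjectivity, given $\mathbb{F} = (\gamma, L, \Ltilde, C - I) \in \Xfourholhk$ with associated maximal dissipative operator $K \in \Xholh$, I would approximate by bounded data. Let $J_n := n(n - K)^{-1}$ be the Yosida resolvent and take $\gamma_n[u] := \gamma[J_n u] + n\norm{u - J_n u}^2$, $L_n := L J_n$, $\Ltilde_n := \Ltilde J_n$, and $C_n := C$. Contractivity of $J_n$ on the form domain (a standard Kato-type argument) transports the structure inequality~\eqref{structure inequality} to the bounded approximants $\mathbb{F}_n$. Since $\gamma_n$ is bounded, Remark~\ref{cited Remark}(ii) places the assembled $F_n$ in $C_0(\init, \noise)$, so~\eqref{Mr bijection} yields a Markov-regular contraction cocycle $V^{(n)}$ with infinitesimal data $\mathbb{F}_n$. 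It remains to show that $(V^{(n)})_n$ converges, strongly and locally uniformly in $t$, to some $V \in \QScChk$, and that $\mathbb{F}^V = \mathbb{F}$.

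The main obstacle is this limit passage and identification. Strong convergence of the expectation semigroups is classical Yosida theory for holomorphic semigroups, but for the full cocycle one needs simultaneous control of all associated semigroups. I would exploit the inequalities~\eqref{minimal inequality}, which, applied to $\mathbb{F}_n$ uniformly in $n$ thanks to the Yosida construction, furnish uniform bounds on $L_n, \Ltilde_n$ relative to the graph norm of $\gamma_n$ and hence uniform resolvent estimates for the generators of each $\Pcd$ of $V^{(n)}$; convergence of these resolvents, combined with the semigroup-representation characterisation of cocycles, produces a holomorphic limit cocycle $V$. Finally, identification of $\gamma^V = \gamma$ is immediate from the expectation-semigroup limit, $C^V = C$ follows from stationarity $C_n = C$ together with Proposition~\ref{3.1*}(b), and $L^V = L$ and $\Ltilde^V = \Ltilde$ follow by passing to the limit in the weak limits defining these operators in the preceding proposition. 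This yields $\mathbb{F}^V = \mathbb{F}$ and completes surjectivity.
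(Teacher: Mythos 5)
Your proposal takes essentially the same route as the paper, which itself only sketches the argument and defers full details to the forthcoming~\cite{LSi holomorphic}: injectivity via the semigroup characterisation of cocycles together with recovery of the associated-semigroup form-generators from $\Gamma$, and surjectivity via Yosida-type approximation at the quadratic-form level feeding into the Markov-regular bijection~\eqref{Mr bijection}, with relative-boundedness (Kato-type) estimates controlling the limit. The only places where your outline is thinner than what the full proof must supply are the uniform estimates justifying the interchange of the limits $t\to 0^+$ and $n\to\infty$ in identifying $L^V$, $\Ltilde^V$ and $C^V$, but these are precisely the ``relative boundedness arguments'' the paper invokes, so your plan is aligned with the intended proof.
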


The inverse of this map is naturally denoted
\[
\mathbb{F} \mapsto V^\mathbb{F}
\]
 and referred to as the \emph{quantum stochastic generation map} for
 holomorphic QS cocycles.
 It extends the bijection
 $F \mapsto V^F$ given in~\eqref{Mr bijection},
 by Remark~\ref{cited Remark} (ii).

What is achieved in the above result is the infinitesimal
 characterisation of a large class of QS contraction
 cocycles which includes all those whose expectation semigroup is
 symmetric, as well as the Markov-regular cocycles previously
 characterised. Whereas cocycles in the latter class are all governed
 by QS differential equations, it is not hard to construct examples
 in the former class which are not so-governed.
 Here is the simplest example.

\begin{example}
Let $A$ be a nonnegative, selfadjoint, unbounded operator on $\init$ with dense
range, let $\noise = \Comp$, let $P \in B(\init)$ be the orthogonal
projection with range $\Comp v$ for a vector $v$ in
$\init\setminus\Dom A$, and set $\mathbb{F} = \big( \| A
\cdot\|^2/2, PA, -PA, 0 \big)$.
 Then $\mathbb{F} \in \Xfourholhk$, however the holomorphic QS
 contraction cocycle $V^\mathbb{F}$ is not governed by a QS
 differential equation on any exponential domain. The reason for
 this is that $(PA)^* = AP = 0|_{\{v\}^\perp}$, so that the domain
 on which $V$ satisfies a QS differential equation (\cite{LW camb},
 Theorem 4.2) fails to be dense in $\init$.
\end{example}

We may now put some flesh on Part (a) of Proposition~\ref{3.1*}.

\begin{thm}
Let $V\in \QScCholhk$, and let 
 $\Gamma = \Gamma^V$ be the quadratic form on $\init\ot\khat$ 
 associated with the QS generator $\mathbb{F}^V$ of $V$.
 Then, for each $c,d\in\noise$, the quadratic form generator
 of the $(c,d)$-associated semigroup of $V$ is given by
 \[
\gamma_{c,d}[u] = \Gamma(u\chat,u\dhat) + \chi(c,d)\norm{u}^2,
 \quad
 u\in\calQ^V,
 \]
where $ \chi(c,d) := (\norm{c}^2 + \norm{d}^2)/2 - \ip{c}{d}$.
\end{thm}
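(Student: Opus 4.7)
The plan is to recognise $\gamma_{c,d}$ as the form-generator of the expectation semigroup of the $(c,d)$-associated cocycle $V^{c,d}:=((W^c_t)^* V_t W^d_t)_{t\geq 0}$. By Proposition~\ref{3.1*}(a), this cocycle lies in $\QScCholhk$ and its form-generator has domain $\calQ^V$, so the task reduces to evaluating, for $u\in\calQ^V$,
\[
\gamma_{c,d}[u] = \lim_{t\to 0^+} t^{-1}\bigl(\|u\|^2 - \ip{u\vp(c_{[0,t[})}{V_t\, u\vp(d_{[0,t[})}\bigr),
\]
having used $W^d_t u\ve(0) = u\vp(d_{[0,t[})$ and the adjoint. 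Factoring the normalisation $\vp(f) = e^{-\|f\|^2/2}\ve(f)$, the inner product becomes $e^{-t(\|c\|^2+\|d\|^2)/2}\ip{u\ve(c_{[0,t[})}{V_t u\ve(d_{[0,t[})}$.

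Next I would insert the Fock-layer expansion $\ve(c_{[0,t[}) = \ve(0) + c_{[0,t[} + \eta_t(c)$, where $\eta_t(c) := \sum_{n\geq 2} c_{[0,t[}^{\otimes n}/\sqrt{n!}$ lies in the $({\geq}2)$-particle subspace of $\Fock$ (hence orthogonal to $\ve(0)$) and has norm $O(t)$, together with the analogous expansion in $d$. Distributing over $V_t$ produces nine bilinear terms. Four of them yield, via Proposition~\ref{3.1*}(b) and the limits defining $\LV$, $\LtildeV$, $\CV$ in the preceding proposition (identifying $u\otimes c_{[0,t[} = E_{1_{[0,t[}}(u\otimes c)$), the asymptotics
\begin{align*}
 \ip{u\ve(0)}{V_t u\ve(0)} &= \|u\|^2 - t\gV[u] + o(t),\\
 \ip{u\otimes c_{[0,t[}}{V_t u\ve(0)} &= t\ip{u\otimes c}{\LV u} + o(t),\\
 \ip{u\ve(0)}{V_t (u\otimes d_{[0,t[})} &= t\ip{\LtildeV u}{u\otimes d} + o(t),\\
 \ip{u\otimes c_{[0,t[}}{V_t (u\otimes d_{[0,t[})} &= t\ip{u\otimes c}{\CV(u\otimes d)} + o(t),
\end{align*}
while the remaining five terms involving $\eta_t(c)$ or $\eta_t(d)$ are all $o(t)$: pairings of $\eta_t$ with $c_{[0,t[}$, $d_{[0,t[}$, or another $\eta_t$ are $O(t^{3/2})$ or smaller by contractivity, whereas the two dangerous pairings $\ip{u\eta_t(c)}{V_t u\ve(0)}$ and $\ip{u\ve(0)}{V_t u\eta_t(d)}$ reduce, upon using $\eta_t\perp\ve(0)$, to $\ip{u\eta_t(c)}{(V_t - I) u\ve(0)}$ and $\ip{(V_t^* - I)u\ve(0)}{u\eta_t(d)}$ respectively, each being $O(t)\cdot o(1) = o(t)$ by strong continuity of $V$ and $V^*$.

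Summing these nine contributions, multiplying by the prefactor $e^{-t(\|c\|^2+\|d\|^2)/2} = 1 - t(\|c\|^2+\|d\|^2)/2 + o(t)$, and dividing by $t$ yield
\[
\gamma_{c,d}[u] = \tfrac{\|c\|^2+\|d\|^2}{2}\|u\|^2 + \gV[u] - \ip{u\otimes c}{\LV u} - \ip{\LtildeV u}{u\otimes d} - \ip{u\otimes c}{\CV(u\otimes d)}.
\]
Writing $\tfrac{\|c\|^2+\|d\|^2}{2} = \chi(c,d) + \ip{c}{d}$ and folding the $\ip{c}{d}\|u\|^2 = \ip{u\otimes c}{u\otimes d}$ term into $-\ip{u\otimes c}{\CV(u\otimes d)}$ to form $-\ip{u\otimes c}{(\CV - I)(u\otimes d)}$, one obtains exactly $\Gamma^V(u\chat, u\dhat) + \chi(c,d)\|u\|^2$ by the definition of $\Gamma^V$. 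The main technical obstacle is the careful handling of the two dangerous $\eta_t$ remainder terms above; the rest is routine bookkeeping with the limits supplied by the preceding proposition.
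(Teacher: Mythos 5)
The paper states this theorem without proof (full proofs are deferred to the forthcoming~\cite{LSi holomorphic}), so there is no in-paper argument to compare against; your proposal supplies exactly the argument the surrounding infrastructure is designed to support, and it is correct. The reduction to $\gamma_{c,d}[u]=\lim_{t\to0^+}t^{-1}\bigl(\norm{u}^2-\ip{u\vp(c_{[0,t[})}{V_t\,u\vp(d_{[0,t[})}\bigr)$, the three-term splitting of the exponential vectors, the identification of the four leading terms with $\gV$, $\LV$, $\LtildeV$, $\CV$ via the weak limits of Propositions~\ref{3.1*} and its sequel, and the final bookkeeping with $\chi(c,d)$ all check out; in particular your handling of the two $O(t)$ remainder terms by subtracting the (vanishing) vacuum pairing and invoking $\norm{(V_t-I)u\ve(0)}=o(1)$ is the right move. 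Two small points deserve explicit justification rather than tacit use. First, you need the abstract fact that for a holomorphic contraction semigroup with (closed, accretive, semisectorial) form generator $q$ one has $t^{-1}\ip{u}{(I-P_t)u}\to q[u]$ for $u$ in the form domain --- applied both to the expectation semigroup (your first asymptotic) and to $P^{c,d}$ (your opening reduction, which also uses Proposition~\ref{3.1*}(a) to know the form domain is $\calQ^V$); this is standard for sectorial forms and transfers to the semisectorial case by considering $(e^{-t}P_t)$, but it is the load-bearing analytic input and should be cited. Second, strong continuity of $t\mapsto V_t^*$ at $0$ is not part of the definition of $\QScChk$; it does follow, since for contractions $\norm{(V_t^*-I)\zeta}^2\le 2\norm{\zeta}^2-2\re\ip{\zeta}{V_t\zeta}\to0$, but a line to this effect is needed to close the second dangerous term.
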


 This chimes with Remark~\ref{cited Remark} (ii), 
 and also with (4.1) in~\cite{LW camb},
 the corresponding identity for the case where $V$ is governed by 
 a QS diffierential equation with reasonable block matrix operator as coefficient.

The next result shows that to each holomorphic QS contraction
cocycle on $\init$ with noise dimension space $\noise$ corresponds a
holomorphic contraction semigroup on $\init \ot \khat$.

\begin{thm}
Let $\mathbb{F} = (\gamma, L, \Ltilde, C-I) \in \Xfourholhk$ and let
$\Gamma$ be the associated quadratic form on $\init\ot\khat$, namely
$\Dom \Gamma = \calQ \op (\init\ot\noise)$, where
 $\calQ := \Dom \gamma$, and
\[
\Gamma[\zeta] := \gamma[u] -
 \big( \ip{\xi}{Lu} + \ip{\Ltilde u}{\xi} + \ip{\xi}{(C-I)\xi}
 \big),
 \quad
\text{ for } \zeta = \binom{u}{\xi} \in \Dom \Gamma.
\]
Then $\Gamma \in \Xholhotkhat$.
\end{thm}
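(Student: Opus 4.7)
\emph{Proof plan.} The plan is to verify each of the four defining properties of $\Xholhotkhat$ in turn: density of domain, accretivity, semisectoriality, and closedness. Write $\zeta = \binom{u}{\xi}$ for a typical element of $\Dom\Gamma = \calQ \op (\init\ot\noise)$.

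Density of $\Dom\Gamma$ in $\init \op (\init\ot\noise) = \init\ot\khat$ is immediate from density of $\calQ$ in $\init$, and accretivity $\re\Gamma[\zeta] \geq 0$ is immediate from the structure inequality~\eqref{structure inequality}, whose left-hand side is nonnegative.

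The technical core is to establish equivalence of the form-norm $\|\cdot\|_+$ arising from $\Gamma$, namely $\|\zeta\|_+^2 := \re\Gamma[\zeta] + \|\zeta\|^2$, with the natural direct-sum norm $\|\zeta\|_\oplus^2 := \|u\|_{+,\gamma}^2 + \|\xi\|^2$, where $\|u\|_{+,\gamma}^2 := \re\gamma[u] + \|u\|^2$. Starting from the identity
\[
\re\gamma[u] = \re\Gamma[\zeta] + \re\ip{\xi}{Lu} + \re\ip{\Ltilde u}{\xi} + \re\ip{\xi}{(C-I)\xi},
\]
the idea is to bound the cross terms via the minimal inequalities $\|Lu\|^2, \|\Ltilde u\|^2 \leq 2\re\gamma[u]$ of Remarks~\ref{cited Remark}(i), Cauchy--Schwarz, the estimate $\|(C-I)\xi\| \leq 2\|\xi\|$, and AM--GM with a parameter $\epsilon$, yielding
\[
|\re\ip{\xi}{Lu}| + |\re\ip{\Ltilde u}{\xi}| + |\re\ip{\xi}{(C-I)\xi}| \leq 2\epsilon\,\re\gamma[u] + \Big(\tfrac{1}{\epsilon}+2\Big)\|\xi\|^2.
\]
Choosing $\epsilon$ small enough (e.g.\ $\epsilon = 1/4$) lets us absorb $\re\gamma[u]$ on the right and so obtain $\re\gamma[u] \leq A\,\re\Gamma[\zeta] + B\|\xi\|^2$ for explicit constants $A,B$, and symmetrically $\re\Gamma[\zeta] \leq A'\re\gamma[u] + B'\|\xi\|^2$; these two inequalities combine to give the desired norm equivalence on $\Dom\Gamma$.

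Closedness of $\Gamma$ then follows at once: $\calQ$ is complete under $\|\cdot\|_{+,\gamma}$ because $\gamma$ is closed, hence $\calQ \op (\init\ot\noise)$ is complete under $\|\cdot\|_\oplus$, and so under the equivalent norm $\|\cdot\|_+$. Semisectoriality follows by estimating
\[
|\im\Gamma[\zeta]| \leq |\im\gamma[u]| + |\ip{\xi}{Lu}| + |\ip{\Ltilde u}{\xi}| + |\im\ip{\xi}{(C-I)\xi}|,
\]
using the semisectoriality of $\gamma$ on the first term, the minimal inequalities and Cauchy--Schwarz on the middle two (each dominated by $\re\gamma[u] + \|\xi\|^2$), and contractivity of $C$ on the last (bounded by $2\|\xi\|^2$), and then invoking the norm equivalence to conclude $|\im\Gamma[\zeta]| \leq C''\|\zeta\|_+^2$ for some $C''\geq 0$. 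The main obstacle is the bookkeeping to choose $\epsilon$ in the AM--GM step with enough slack to absorb $\re\gamma[u]$ while tracking explicit constants; once the norm equivalence is secured, the remaining verifications are essentially automatic.
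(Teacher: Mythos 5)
Your proposal is correct and follows essentially the same route as the paper: density and accretivity read off directly from the hypotheses, then closedness and semisectoriality deduced from the equivalence of the form-norm of $\Gamma$ with the direct-sum norm $\binom{u}{\xi}\mapsto\big(\re\gamma[u]+\norm{u}^2+\norm{\xi}^2\big)^{1/2}$, which the paper states as ``easily verified'' and you verify via the minimal inequalities, Cauchy--Schwarz and an absorption argument.
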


\begin{proof}
Since $\calQ$ is dense in $\init$ and
\[
 \re \Gamma[\zeta] \geq \frac{1}{2} \| \Delta F \zeta \|^2,
 \quad
 \zeta \in \Dom \Gamma = \calQ \op (\init\ot\noise),
\]
 where $\Delta F :=
 \left[\begin{smallmatrix} 0 & 0 \\ L & C-I \end{smallmatrix}\right]$,
 the quadratic form
 $\Gamma$ is densely defined and accretive. The fact that $\Gamma$
 is also closed and semisectorial follows from the easily verified
 equivalence of the norms
 \begin{align*}
 &\zeta \mapsto \big( \re \Gamma[\zeta] + \norm{\zeta}^2 \big)^{1/2},
 \text{ and }
 \\
 &\binom{u}{\xi} \mapsto
 \big( \re \gamma[u] + \norm{u}^2 + \norm{\xi}^2 \big)^{1/2}
 \end{align*}
 on $\Dom \Gamma = \calQ \op (\init\op\noise)$,
 and the fact that the accretive form $\gamma$ is closed and
 semisectorial itself.
\end{proof}

 In view of Remark~\ref{cited Remark} (iii),
  it follows that there is a
bijective correspondence between the collection $\QScCholhk$ and a
class of holomorphic contraction semigroups on $\init\ot\khat$.
The latter semigroups are quite different from the global semigroups
associated with cocycles considered in~\cite{LW cb1}, all of which
are specifically coordinate-dependent and enjoy a Schur-action.

We have not so far discussed the question of coisometry and
 unitarity of holomorphic QS contraction cocycles $V$. These
 questions are of interest for their
 (unitality, i.e.\ identity-preserving, and multiplicativity)
 implications for induced cocycles on $B(\init)$, and von
 Neumann subalgebras thereof.
It is not hard to verify that a necessary condition for isometry is
 that equality holds in the inequality governing $\mathbb{F}^V$:
\[
 2 \re \Gamma^V[\zeta] =  \| \Delta F^V \zeta \|^2,
 \quad
 \zeta \in \calQ \op (\init\ot\noise),
\]
 the holomorphic counterpart to the  condition $q(F)=0$ for Markov-regular cocycles.
Whilst this condition is also sufficient in the case of
Markov-regular cocycles, sufficient conditions are trickier in the
holomorphic case. We quote a general result from~\cite{LW cb1} (see
also~\cite{LSi holomorphic}).
 Recall the definition~\eqref{induced cocycle}.

\begin{thm}
 \label{V coisom}
 Let $V\in\QScChk$ and let $\{\calPcd: c,d\in\noise\}$ be the
 associated semigroups of the induced cocycle $k^V$ on $B(\init)$.
 Then, for any total subset $\Til$ of $\noise$ containing $0$, the
 following are equivalent\tu{:}
 \begin{rlist}
 \item
 $V$ is coisometric \tu(equivalently, $k^V$ is unital\tu{)}.
 \item
 $\calPcc$ is unital for all $c\in\Til$.
 \end{rlist}
\end{thm}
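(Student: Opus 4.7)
The equivalence of coisometry of $V$ with unitality of $k^V$ is immediate from $k^V_t(I_\init) = V_tV_t^*$. Direction (i)$\Rightarrow$(ii) follows at once: using that $\vp$ denotes the \emph{normalised} exponential vector,
\[
\calPcc_t(I_\init) = E^{\vp(c_{[0,t[})} V_tV_t^* E_{\vp(c_{[0,t[})} = \norm{\vp(c_{[0,t[})}^2 I_\init = I_\init
\qquad (c\in\Til,\ t\geq 0).
\]

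For the substantive direction (ii)$\Rightarrow$(i), set $D_t := I_{\init\ot\Fock} - V_tV_t^*$, which is positive since $V_t$ is a contraction. By the Fock factorisation $\vp(f) = \vp(f|_{[0,t[})\ot\vp(f|_{[t,\infty[})$ combined with the adaptedness of $V_t$, and then Proposition~\ref{skeide} together with boundedness of $D_t$, the conclusion $D_t = 0$ reduces to showing
\[
D_t(u \ot \vp(f)) = 0
\qquad (u \in \init,\ f \in \Step_\Til,\ \mathrm{supp}\,f \subset [0,t[).
\]
I would prove this by induction on the number of distinct values taken by $f$. In the base case $f = c\mathbf{1}_{[0,t[}$ with $c \in \Til$, the hypothesis $\calPcc_t(I_\init) = I_\init$ unpacks, using $\norm{\vp(c_{[0,t[})}=1$, to $\ip{u\ot\vp(c_{[0,t[})}{D_t(u\ot\vp(c_{[0,t[}))} = 0$ for every $u\in\init$, whence positivity of $D_t$ gives the claim.

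For the inductive step, let $s$ be the first breakpoint of $f$, so that $f|_{[0,s[}$ is the constant $c_1 \in \Til$ and $f|_{[s,t[}$ has fewer distinct values than $f$. The cocycle identity $V_t = V_s\sigma_s(V_{t-s})$ and the $*$-homomorphism property of $\sigma_s$ yield the positive decomposition
\[
D_t = (I - V_sV_s^*) + V_s\,\sigma_s(I - V_{t-s}V_{t-s}^*)\,V_s^*.
\]
On the vector $u \ot \vp(f) = u\ot\vp(f|_{[0,s[})\ot\vp(f|_{[s,t[})$, Fock factorisation and adaptedness of $V_s$ reduce the first summand to zero by the base case at time $s$. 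For the second summand, set $\eta := V_s^*(u\ot\vp(f|_{[0,s[})) \in \init\ot\Fock_{[0,s[}$; expanding $\eta = \sum_i w_i\ot\phi_i$ and unfolding the definition of $\sigma_s$ reduces its action to applications of $I - V_{t-s}V_{t-s}^*$ to vectors of the form $w_i\ot\vp(\tilde f_2)$, where $\tilde f_2 \in \Step_\Til$ is the backward shift of $f|_{[s,t[}$ by $s$, supported in $[0,t-s[$ with fewer distinct values than $f$; each such application vanishes by the inductive hypothesis.

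The main nuisance is the final bookkeeping step: threading $\sigma_s$ carefully through the tensor reordering $\init\ot\Fock_{[0,s[}\ot\Fock_{[s,\infty[} \cong \Fock_{[0,s[}\ot(\init\ot\Fock_{[s,\infty[})$ in order to identify its action on the non-simple tensor $\eta\ot\vp(f|_{[s,t[})$ with a shifted copy of $I - V_{t-s}V_{t-s}^*$. Beyond that, the argument rests only on positivity of $D_t$, adaptedness, the cocycle identity, and the density statement of Proposition~\ref{skeide}.
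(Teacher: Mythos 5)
The paper does not prove this theorem: it is explicitly quoted from~\cite{LW cb1} (with a pointer to~\cite{LSi holomorphic}), so there is no internal proof to compare yours against. On its own terms your argument is sound and is essentially the standard semigroup-decomposition route. The identity $k^V_t(I_\init)=V_tV_t^*$ settles the parenthetical equivalence and (i)$\Rightarrow$(ii) (normalisation of $\vp$ being the whole point there); for (ii)$\Rightarrow$(i), the positive decomposition $D_t = D_s + V_s\,\sigma_s(D_{t-s})\,V_s^*$ --- which uses the cocycle identity together with the fact that $\sigma_s$ is a \emph{unital} $*$-homomorphism --- combined with positivity of $D_t$, adaptedness, and totality of $\{u\ve(f): u\in\init,\ f\in\Step_\Til\}$ from Proposition~\ref{skeide}, does exactly what you claim. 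One slip to repair: your induction should run over the number of intervals of constancy of $f$ in $[0,t[$ (equivalently, its number of breakpoints), not over the number of \emph{distinct values} $f$ takes. Peeling off the first constant piece need not reduce the latter (the first value may recur later in the step function), whereas it always reduces the former by one; with that change the inductive step, which invokes the base case at time $s$ and the inductive hypothesis at time $t-s$ for the back-shifted function $s_s^*f\in\Step_\Til$, goes through uniformly in $t$. The remaining bookkeeping you flag --- identifying $\sigma_s(D_{t-s})$ with $I_{\Fock_{[0,s[}}\otol\Theta_s(D_{t-s})$ under the factorisation $\init\ot\Fock\cong\Fock_{[0,s[}\ot(\init\ot\Fock_{[s,\infty[})$, so that it acts on the non-simple tensor $\eta\ot\vp(f|_{[s,\infty[})$ term by term --- is routine since both sides are bounded and agree on exponential vectors.
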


 Thus, in
 case $\noise$ is finite dimensional, (ii) need only involve the
verification of unitality of $(1+\dim \noise)$ semigroups.

The above result frames a theorem of Fagnola to the effect that, in
case $V$ satisfies a QS differential equation on a core for the
generator of its expectation semigroup, unitality of just the
expectation semigroup $\mathcal{P}^{0,0}$ of $k^V$ suffices
 (\cite{Fagnola}, Theorem 5.23).

We next explain the connection to minimal quantum dynamical
semigroups. By a \emph{quantum dynamical semigroup} on $B(\init)$ is
meant a semigroup $\QDS = (\QDS_t)_{t\geq 0}$ of normal, completely
positive contractions on $B(\init)$ which is continuous in the
point-ultraweak topology.
 Let $\Xtwohk$ denote the collection of pairs $(K,L)$ consisting of
 a $C_0$-semigroup generator $K\in\Xh$ and
 operator $L$ from $\init$ to $\init\ot\noise$ satisfying
 \[
 \Dom L \supset \Dom K \text{ and }
 \norm{Lu}^2 \leq -2 \re\ip{u}{Ku},
 \quad
 u \in \Dom K.
 \]
 For $x \in B(\init)$ define the quadratic form
 $\Lindblad (x) = \Lindblad_{(K,L)}(x)$ on $\init$ by
\begin{align*}
 &\Dom \Lindblad (x) := \Dom K, \\
 &\Lindblad(x)[u] :=
 \ip{Ku}{xu} + \ip{u}{xKu} + \ip{Lu}{(x\ot I_\noise)Lu}.
\end{align*}

A quantum dynamical semigroup $\QDS$ on $B(\init)$ is \emph{minimal}
 for the pair $(K,L)\in\Xtwohk$ if it satisfies:
 \begin{rlist}
 \item
 for all $x\in B(\init)$, $u\in\Dom K$ and $t\in\Rplus$,
 \[
 \ip{u}{\QDS_t(x)u} = \ip{u}{xu} +
 \int_0^t ds\ \Lindblad_{(K,L)}(\QDS_s(x))[u];
 \]
 \item
 if $\QDS'$ is another quantum dynamical semigroup satisfying (i)
 then
 \[
 \QDS'_t(x)) \geq \QDS_t(x) \text{ for all } x \in B(\init)_+, t\in\Rplus.
 \]
 \end{rlist}
Minimal quantum dynamical semigroups are noncommutative counterparts
of Feller's minimal solutions of the Fokker-Planck equation.

\begin{thm}[\cite{Davies minimal}]
 Let $(K,L)\in\Xtwohk$.
 Then there is a unique minimal quantum dynamical semigroup
 $\minimalKL$ associated with $(K,L)$.
\end{thm}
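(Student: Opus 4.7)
The plan is to construct $\minimalKL$ by a Dyson--Phillips iteration built from the contractive $C_0$-semigroup $P = (P_t)_{t\geq 0}$ generated by $K$ and the formal perturbation $x \mapsto L^*(x \ot I_\noise) L$. First I would define, for $x \in B(\init)_+$, a sequence $(\QDS^{(n)}_t(x))_{n\geq 0}$ recursively by setting $\QDS^{(0)}_t(x) := P_t^* x P_t$ and, on $u \in \Dom K$,
\[
 \ip{u}{\QDS^{(n+1)}_t(x) u} := \ip{u}{P_t^* x P_t u} + \int_0^t \ip{L P_{t-s} u}{\big(\QDS^{(n)}_s(x) \ot I_\noise\big) L P_{t-s} u}\, ds.
\]
The integrand is well defined since $P_r$ leaves $\Dom K \subset \Dom L$ invariant. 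The hypothesis $\norm{Lv}^2 \leq -2\re\ip{v}{Kv}$ ($v\in\Dom K$), combined with the identity $\frac{d}{dr}\norm{P_r u}^2 = 2\re \ip{P_r u}{K P_r u}$, yields the telescoping bound $\int_0^t \norm{L P_r u}^2\, dr \leq \norm{u}^2 - \norm{P_t u}^2$, and an induction on $n$ gives $\QDS^{(n)}_t(I_\init) \leq I_\init$, so each sesquilinear form in the recursion in fact arises from a bounded positive operator of norm at most $\norm{x}$.

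Next I would observe, again by induction, that each $\QDS^{(n)}_t$ is normal and completely positive and that $\QDS^{(n)}_t(x) \leq \QDS^{(n+1)}_t(x)$ for $x\in B(\init)_+$, since the $(n{+}1)$-st minus the $n$-th involves a non-negative integrand. The monotone supremum $\QDS_t(x) := \sup_n \QDS^{(n)}_t(x)$ exists in the weak operator topology and extends by complex linearity to a normal completely positive contraction $\QDS_t$ on $B(\init)$ satisfying the mild equation
\[
 \ip{u}{\QDS_t(x) u} = \ip{u}{P_t^* x P_t u} + \int_0^t \ip{L P_{t-s} u}{(\QDS_s(x) \ot I_\noise) L P_{t-s} u}\, ds, \quad u \in \Dom K.
\]
The semigroup identity $\QDS_{s+t} = \QDS_s \circ \QDS_t$ follows from the mild equation by splitting the integral at $s$ and applying Fubini (established first level-by-level in the approximation), and point-ultraweak continuity then comes from the strong continuity of $P$ together with dominated convergence. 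Differentiating the mild equation at $t=0$ in the form sense on $\Dom K$, and using the definition of $\Lindblad_{(K,L)}$, yields property~(i) of the statement.

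For minimality~(ii), if $\QDS'$ is any QDS satisfying~(i), then recasting~(i) in mild form via the standard variation-of-constants trick against $P$ gives $\QDS'_t(x) \geq \QDS^{(0)}_t(x)$ for $x\in B(\init)_+$, and a trivial induction feeds this back up the recursion to $\QDS'_t(x) \geq \QDS^{(n)}_t(x)$ for every $n$; the supremum then gives $\QDS'_t(x) \geq \QDS_t(x)$. Uniqueness of the minimal QDS is immediate since any two minimal solutions dominate one another.

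The hard part is handling the unboundedness of $K$ and $L$ rigorously at each stage: one must check that the sesquilinear forms in the recursion really do come from bounded operators, that monotone limits may be interchanged with both the $[0,t]$-integration and the bilinear pairing against $LP_{t-s}u$, and that the mild equation genuinely encodes the Kolmogorov equation~(i) after form-differentiation. The contractivity bound $\QDS^{(n)}_t(I_\init) \leq I_\init$ and the monotone/dominated convergence theorems underwrite these interchanges, provided one crucially exploits the invariance of $\Dom K$ under $P$ and the containment $\Dom L \supset \Dom K$ built into the definition of $\Xtwohk$.
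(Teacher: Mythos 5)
The paper gives no proof of this theorem: it is quoted directly from \cite{Davies minimal}, and your monotone Dyson--Phillips iteration $\QDS^{(0)}_t(x)=P_t^*xP_t$, $\QDS^{(n+1)}$ defined by the mild integral recursion, with the telescoping bound $\int_0^t\norm{LP_ru}^2\,dr\leq\norm{u}^2-\norm{P_tu}^2$ forcing $\QDS^{(n)}_t(I)\leq I$ and the increasing limit giving the minimal solution, is precisely the classical construction of that reference (see also \cite{Fagnola}), so your proposal is correct and follows the same route as the cited source. The one step you treat too lightly is the semigroup law $\QDS_{s+t}=\QDS_s\circ\QDS_t$, which in the standard treatments is obtained from a composition identity for the iterates or a uniqueness/minimality argument for the integral equation rather than a bare splitting-plus-Fubini step, but this is a matter of detail rather than of approach.
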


  Note that,
 if $\minimalKL$ is \emph{conservative}, that is unital:
 $\minimalKL_t(I)=I$ ($t\in\Rplus$), then
 $\Lindblad_{(K,L)}(I) = 0$:
 \[
 \norm{Lu}^2 + 2 \re \ip{u}{Ku} = 0,
 \quad
 u \in \Dom K.
 \]

In the spirit of the inclusion $\Xholh \subset \Xh$ we may consider
the subclass $\Xtwoholhk$ of $\Xtwohk$ consisting of pairs
$(\gamma,L)$ where $\gamma\in\Xholh$ and $(K,L) \in \Xtwohk$,
$-K$ being the operator corresponding to $\gamma$.
 In this case we write $\QDS^{(\gamma,L)}$ for the
 associated minimal semigroup on $B(\init)$.
In view of the relation
\[
 \norm{Lu}^2 \leq 2 \re \gamma[u], \quad u \in \Dom \gamma,
\]
 for $\mathbb{F} = (\gamma, L, \Ltilde, C-I) \in \Xfourholhk$, any such $\mathbb{F}$ 
truncates to a pair $(\gamma,L) \in \Xtwoholhk$.
 Conversely, any pair $(\gamma,L) \in \Xtwoholhk$ `dilates' to a
 quadruple
 \[
 \mathbb{F} = (\gamma, L, -C^*L, C-I)\in\Xfourholhk
\]
 by
 choosing a unitary, or contraction, $C$ in $B(\init;\noise)$;
 the choices $C=I$, respectively
 $C=0$ or $C=-I$, being notable ones.
 The next result underpins these considerations.

 \begin{thm}
 \label{underpins}
 Let $V = V^\mathbb{F}$ for
 $\mathbb{F} = (\gamma, L, \Ltilde, C-I) \in \Xfourholhk$.
 Then the expectation semigroup of the induced cocycle on $B(\init)$
 of the dual semigroup $\Vdual = V^{\wt{\mathbb{F}}}$ coincides with
 the minimal semigroup associated with the truncation
 $(\gamma, L) \in \Xtwoholhk$ of $\mathbb{F}$,
 equivalently,
 \[
 \mathbb{E}\big[ V^*_t (x\ot I_\Fock) V_t \big] =
 \QDS^{(\gamma,L)}(x),
 \quad
 x\in B(\init), t\in\Rplus.
 \]
 \end{thm}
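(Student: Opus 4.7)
The plan is to set $\QDS^V_t(x) := \mathbb{E}[V_t^*(x\ot I_\Fock)V_t]$ and identify it with $\QDS^{(\gamma,L)}$ by verifying Davies' two defining properties of the minimal QDS. The equivalence with the expectation semigroup of the induced cocycle $k^{\Vdual}$ stated in the theorem follows from $\Vdual_t = R_t V_t^* R_t$ together with the vacuum-invariance $R_t \ve(0) = \ve(0)$, which lets the $R_t$'s be absorbed into the vacuum expectation. That $\QDS^V$ is a quantum dynamical semigroup on $B(\init)$ is routine: complete positivity and contractivity are immediate, normality is inherited from $V_t \in B(\init\ot\Fock)$, the semigroup property follows from the cocycle identity for $k^{\Vdual}$ together with $\mathbb{E} = \mathbb{E}\circ\mathbb{E}_s$, and point-ultraweak continuity from the strong continuity of $V$.

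Next I would verify the forward integral equation
\[
 \ip{u}{\QDS^V_t(x)u} = \ip{u}{xu} + \int_0^t \Lindblad_{(K,L)}(\QDS^V_s(x))[u]\, ds,
 \qquad u \in \Dom K, \ x \in B(\init),
\]
where $K$ is the operator associated with $\gamma$. Writing $\ip{u}{\QDS^V_t(x)u} = \langle \zeta_t^u, (x \ot I_\Fock)\zeta_t^u\rangle$ for $\zeta_t^u := V_t E_{\ve(0)} u$ and applying the abstract It\^o integration in Fock space already used in the proof of the proposition that characterises $L^V$ and $\Ltilde^V$, the Lindbladian $\Lindblad_{(K,L)}$ emerges as the infinitesimal generator. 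The jump operator is $L^V = L$ (rather than $\Ltilde^V$) by virtue of the identification $L^{\Vdual} = \Ltilde^V$ from part~(c) of that proposition, and the bound \eqref{minimal inequality} ensures the integrand is well-defined on $\Dom K \subset \calQ^V$.

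Finally, to conclude that $\QDS^V$ is the \emph{minimal} solution of this equation, I would approximate by Markov-regular cocycles. Set $J_n := n(n - K)^{-1}$ and form the bounded quadruples $\mathbb{F}_n := (\gamma \circ J_n, L J_n, \Ltilde J_n, C-I) \in \Xfourholhk$; both the form-theoretic conditions and the structure inequality \eqref{structure inequality} survive this regularisation, and in fact $\mathbb{F}_n$ lies in $C_0(\init,\noise)$ via Remark~\ref{cited Remark}(ii). The cocycles $V^{\mathbb{F}_n}$ are therefore Markov-regular, and the bounded-coefficient case of the theorem --- a standard quantum It\^o product computation on $(V^{\mathbb{F}_n})^*(x\ot I_\Fock) V^{\mathbb{F}_n}$ --- gives $\mathbb{E}[(V^{\mathbb{F}_n}_t)^*(x\ot I_\Fock)V^{\mathbb{F}_n}_t] = \QDS^{(\gamma_n, L_n)}_t(x)$ with $\gamma_n := \gamma\circ J_n$, $L_n := LJ_n$. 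Davies' monotone construction yields $\QDS^{(\gamma_n,L_n)}_t(x) \uparrow \QDS^{(\gamma, L)}_t(x)$ for $x \geq 0$; combined with a stability statement for the holomorphic generation map --- via Theorem~\ref{holomorphic bijection} and form-level resolvent estimates giving $V^{\mathbb{F}_n} \to V^\mathbb{F}$ strongly --- this forces $\QDS^V_t(x) = \QDS^{(\gamma,L)}_t(x)$.

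The main obstacle is this last stability step: showing that the holomorphic generation map $\mathbb{F} \mapsto V^\mathbb{F}$ is continuous under Yosida-type approximation of $\mathbb{F}$ in a topology strong enough to transfer to the induced expectation semigroups. This perturbation statement for holomorphic QS cocycles must be proved through quadratic-form control on $\init\ot\khat$ running in parallel with Davies' classical approximation scheme for minimal QDSs, so that the cocycle-level and the semigroup-level limits align.
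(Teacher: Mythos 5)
The paper itself offers no proof of Theorem~\ref{underpins}: it is an announcement, with full proofs deferred to~\cite{LSi holomorphic}, and the only guidance given is the remark that the result is the holomorphic counterpart of Fagnola's Theorem~5.22 for QSDE-governed cocycles, whose proof ``uses finite particle vectors in a crucial way''. So your proposal can only be judged on its own terms. Your opening reductions are sound: absorbing the time-reversals via $R_t\ve(0)=\ve(0)$ to identify the expectation semigroup of $k^{\Vdual}$ with $\mathbb{E}[V_t^*(x\ot I_\Fock)V_t]$ is exactly right, as is the identification of $L=L^V$ (not $\Ltilde^V$) as the jump operator, and verifying Davies' property (i) by abstract It\^o integration on $\zeta^u_t=V_tE_{\ve(0)}u$ is the natural route, consistent with the techniques the paper says underlie the Proposition on $L^V$ and $\LtildeV$.

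The genuine gap is in the minimality step, and it is larger than you acknowledge. First, the convergence $\QDS^{(\gamma_n,L_n)}_t(x)\uparrow\QDS^{(\gamma,L)}_t(x)$ is not ``Davies' monotone construction'': Davies' scheme is an increasing iteration for a \emph{fixed} pair $(K,L)$, not a statement about perturbing the generator, and there is no reason for the minimal semigroups of the Yosida-regularised pairs $(\gamma\circ J_n, LJ_n)$ to be monotone in $n$ at all; convergence of minimal semigroups under such regularisation is a substantive theorem (of Chebotarev--Fagnola type) that you would have to prove, not cite. Second, the stability $V^{\mathbb{F}_n}\to V^{\mathbb{F}}$, which you correctly flag as the main obstacle, is doing all the work and is left entirely open; without it the two halves of your argument do not meet. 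Note also that property (i) plus minimality already gives one inequality for free, $\QDS^V_t(x)\geq\QDS^{(\gamma,L)}_t(x)$ for $x\geq 0$, so the whole content of the theorem is the reverse inequality --- precisely the part your outline does not close. In the bounded case this is extracted from the chaos expansion (finite-particle vectors), a mechanism unavailable for holomorphic cocycles constructed via the semigroup representation; some replacement for it, or a proved approximation theorem, is indispensable, and the proposal supplies neither.
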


This is the holomorphic counterpart to a corresponding result for QS
 contraction cocycles governed by a QS differential equation
(\cite{Fagnola}, Theorem 5.22).
 The relationship between Fagnola's analysis --- which uses finite
 particle vectors (rather than exponential vectors) in a crucial
 way,
 and builds on earlier work of Mohari, Sinha and others --- and our
 analysis, remains an intriguing one.

 To end I shall briefly describe
 a new tool for the construction and
 analysis of QS cocycles --- the \emph{QS Trotter product formula}
 (\cite{LSi Trotter1}).
 In brief what this achieves is the representation of the QS cocycle
 $V^F$, where $F$ has the form
 \[
 F = F_1 \boxplus F_2 :=
 \begin{bmatrix}
 K_1 + K_2 & M_1 & M_2 \\ L_1 & C_1 - I_1 & \\ L_2 && C_2 - I_2
 \end{bmatrix}
  \in B(\init\ot\khat) = B(\init \op \init\ot\noise_1 \op \init \ot \noise_2)
 \]
  with $\noise = \noise_1\op\noise_2$ and
 $I_i$ denoting the identity operator on $\init\ot\noise_i$ ($i=1,2$),
 in terms of the cocycles $V^{F_1}$ and $V^{F_2}$, where
 \[
 F_i =
 \begin{bmatrix}
 K_i & M_i \\ L_i & C_i - I_i
 \end{bmatrix}
 \in
   B(\init\ot\khat_i) = B(\init \op \init\ot\noise_i)
 \text{ for } i=1,2.
\]
 It is easily verified that, if $F_1\in  C_0(\init,\noise_1)$ and
  $F_2\in  C_0(\init,\noise_2)$, then  $F\in  C_0(\init,\noise)$.

 In the quantum control literature the composition $\boxplus$,
 known as the \emph{concatenation product},
 has application to quantum 
 feedback 
 networks,
 particularly in combination with the \emph{series product}
 (\cite{GoJ}). The latter corresponds to the
 Evans-Hudson perturbation formula
 (\cite{EvH}, \cite{DaS}, \cite{GLW}, \cite{BLS}) specialised
 to the case where the `free' QS flow is implemented by a
 (Markov-regular) QS  unitary cocycle $V$;
 the perturbed QS flow is implemented by
 the unitary cocycle whose generator is
 (up to the choice of parameterisation)
 the series product
 of the stochastic generator of $V$ and the perturbation coefficient.

 The QS Trotter product formula
  extends to both mapping cocycles and to holomorphic
 contraction operator cocycles  (\cite{LSi Trotter2}).
In the latter case this is given by
 \[
 \mathbb{F} =  \mathbb{F}_1 \boxplus  \mathbb{F}_2 =
  (\gamma, L, \Ltilde, C-I) \in \Xfourholhk
\]
 where, for
 $(\gamma_i, L_i, \Ltilde_i, C_i-I_i) \in \Xfourholhki$ ($i=1,2$),
\[
 \gamma = \gamma_1 + \gamma_2, \
 L = \begin{bmatrix}L_1 \\ L_2 \end{bmatrix}, \
 \Ltilde = \begin{bmatrix}\Ltilde_1 \\ \Ltilde_2 \end{bmatrix}, \
 C-I = \begin{bmatrix}C_1-I_1 &\\& C_2-I_2 \end{bmatrix},
\]
 yielding $V^{\mathbb{F}}$ in
 terms of $V^{\mathbb{F}_1}$ and $V^{\mathbb{F}_2}$.
 The only constraint for forming this concatenation product is that the intersection
 $\Dom \gamma_1 \cap \Dom \gamma_2$ is dense in $\init$ since
 the sum of two closed, accretive, semisectorial forms is
 closed, accretive and semisectorial, and we have the following identities,
 by Pythagoras:
 \[
 \Gamma[\zeta] = \Gamma_1[\eta_1] +  \Gamma_2[\eta_2] \ \text{ and }\
  \norm{\Delta F\zeta} =
  \norm{\Delta F_1\eta_1} +  \norm{\Delta F_2\eta_2}
\]
 where, for $\zeta = \binom{u}{\xi}$ and $\xi=\binom{\xi_1}{\xi_2}$,
 $\eta_1 := \binom{u}{\xi_1}$ and $\eta_2 := \binom{u}{\xi_2}$.
 In the case of mapping cocycles, the
  homomorphic property of cocycles so-constructed on an
 operator algebra is investigated in~\cite{DGS}.

 In all of this work,  the key ingredients are
 Trotter products of the associated semigroups
 of the constituent cocycles.

\bigskip
\noindent
\emph{ACKNOWLEDGEMENT.} The joint work with Kalyan Sinha described
in Section~\ref{section: holomorphic} is supported by the UKIERI
Research Collaboration Network grant \emph{Quantum Probability,
Noncommutative Geometry \& Quantum Information}.



\end{document}